\title{A structure theorem and the graded Betti numbers for almost complete intersections} 
\author{Alfio Ragusa
	\and  Giuseppe Zappal\`a}
\subjclass[2000]{13 D 40, 13 H 10}
\keywords{Almost complete intersections, Gorenstein rings, pfaffians, Betti numbers}
\DeclareSymbolFont{rsfscript}{OMS}{rsfs}{m}{n}
\DeclareSymbolFontAlphabet{\mathrsfs}{rsfscript}
\DeclareSymbolFont{AMSb}{U}{msb}{m}{n}
\DeclareSymbolFontAlphabet{\mathbb}{AMSb}
\DeclareSymbolFont{eufrak}{U}{euf}{m}{n}
\DeclareSymbolFontAlphabet{\gothic}{eufrak}
\def\ac{\`}
\newcommand{\f}{\footnotesize}
\newcommand{\aci}{{\it aci}}
\newcommand{\pp}{\mathbb P}
\newcommand\depth{\operatorname{depth}}
\newcommand\hd{\operatorname{hd}}
\newcommand\CI{\operatorname{CI}}
\newcommand\Supp{\operatorname{Supp}}
\newcommand\Nm{{\mathbb N}_{\le}}
\newcommand\pf{\operatorname{pf}}
\newcommand\Pf{\operatorname{Pf}}
\newcommand\idt{\operatorname{id}}
\newcommand\rw{\Rightarrow}
\newcommand\mci{\operatorname{mci}}
\newcommand\Mng{\operatorname{Mng}}
\newcommand\im{\operatorname{im}}
\newtheorem{thm}{Theorem}[section]
\newtheorem{lem}[thm]{Lemma}
\newtheorem{prp}[thm]{Proposition}
\newtheorem{cor}[thm]{Corollary}
\theoremstyle{definition}
\newtheorem{dfn}[thm]{Definition}
\theoremstyle{remark}
\newtheorem{rem}[thm]{Remark}
\newtheorem{exm}[thm]{Example}
\begin{document}


\begin{abstract}
We provide a structure theorem for all  almost complete intersection ideals of depth three in any Noetherian local ring. In particular, we find that the minimal generators are the pfaffians of suitable submatrices of an alternating matrix. From the graded version of the previous result, we characterize the graded Betti numbers of all $3$-codimensional almost complete intersection schemes of $\pp^r.$
\end{abstract}

\maketitle

\section*{Introduction}
\markboth{\it Introduction}{\it Introduction}
It is well known that in any regular local ring $R$ every ideal $I$ for which the number of elements in a minimal set of generators is equal to its heigth is generated by a regular sequence, in particular it is a perfect ideal (in the sense that $\hd R/I= \depth I$). This simple fact has an important consequence for studying projective schemes $X$ of $ \pp^r_k,$ since this class of ideals characterizes complete intersection schemes, i.e. schemes $X$ of codimension $c$ whose defining (saturated) ideal $I_X$ in $R=k[x_0,x_1,\ldots,x_r]$ is generated by $c$ elements, hence schemes which can be described with the minimum number of equations.  Thus, complete intersection schemes are necessarily generated by a regular sequence and consequently are arithmetically Cohen Macaulay (aCM). In particular, from this one has a very simple description of a minimal free resolutions of $R/I_X$ (Koszul resolution) and consequently of the graded Betti numbers. The question becomes immediately more complicated when one admits that the ideal $I_X$ has one generator more than its depth. In some part of the literature aCM schemes $X$ of $ \pp^r_k,$ of codimension $c$ whose defining ideal $I_X$ is  minimally generated by $c+1$ elements are called {\em almost complete intersection} schemes. Similarly, in any  unitary commutative local ring $A$ a perfect ideal $I$ for which $\nu(I)=\depth I+1$ is said to be an {\em almost complete intersection} ideal.  Some discussion about almost complete intersection  schemes can be found on the J. Migliore book's \cite{Mi}. Very little is known about almost complete intersection schemes (or ideals), for instance some result for \lq\lq generic\rq\rq\ almost complete intersections can be found on the article of  J. Migliore and R. Mirò Roig \cite{MM}. The very first observation which makes these ideals (or these schemes) nice to study is the fact that every almost complete intersection is directly linked in a complete intersection to a Gorenstein ideal (or aG scheme). Indeed,  if $I_Q \subset R$ is the defining ideal of an almost complete intersection of codimension $c$ and $I_Z \subseteq I_Q$ is generated by $c$ minimal generators of $I_Q$ which form a regular sequence  then $I_G:=I_Z:I_Q$  is the defining ideal of an aG  scheme. By liaison theory (see for instance \cite{PS}) we have also $I_Q=I_Z:I_G.$  Now, using this fact and the very nice structure theorem for aG schemes of codimension $3$ (Buchsbaum-Eisenbud \cite{BE}), in this paper we will give a structure theorem for all almost complete intersection of depth $3$ (see Theorem \ref{tqci}). This result generalizes an analogous one of S. ~Seo which, in \cite{Se} Theorem 2.4, gives a similar characterization for those almost complete intersections $Q$  such that  the aG scheme defined by $I_G:=I_Z:I_Q$ has the three minimal generators of smallest degree performing a regular sequence. On the other hand, by Diesel paper (see \cite{Di}), one can characterize all graded Betti numbers for $3$-codimensional aG schemes. Then, using this characterization and the mentioned structure theorem, we obtain the main result of this paper, i.e a charaterization of  all graded Betti numbers for $3$-codimensional almost complete intersections (Theorem \ref{betti}). Thus, this result provides a new large class of $3$-codimensional projective schemes, besides Gorenstein schemes, for which one is able to give a complete description of the graded Betti numbers.
 
\section{Notation and preliminaries} 
\markboth{\it Notation and preliminaries}
{\it Notation and preliminaries}

Let $R$ be a Noetherian local ring. A perfect ideal $I$ of $R$ is called an {\em almost complete intersection} ideal if $\nu(I)=\depth I+1$ (here $\nu(I)$ denotes the number of minimal generators of $I$). Of course, in case $R$ is Cohen-Macaulay an ideal $I$ is an almost complete intersection when $\nu(I)=ht(I)+1,$ where  $ht(I)$ is its height or codimension. Analogously, let $k$ be an algebraically closed field and $X \subset \pp^r_k$ be a closed subscheme of codimension $c.$ Let $I_X\subset R=k[x_0,x_1,\ldots,x_r]$ be the saturated homogeneous ideal defining $X.$  Then $X \subset \pp^r_k$ is said an {\em almost complete intersection} scheme when its defining ideal $I_X$ is perfect and minimally generated by $c+1$ elements. Our first observation is that every almost complete intersecion is directly linked in a complete intersection to an arithmetically Gorenstein (aG) scheme. Indeed,  if $I_Q \subset R$ is the defining ideal of an almost complete intersection of heigth $c$ and $I_Z \subseteq I_Q$ is generated by $c$ minimal generators of $I_Q$ which form a regular sequence  then $I_G:=I_Z:I_Q$  is the defining ideal of an aG  scheme. By liaison theory (see \cite{PS}) for a complete discussion on this argument) we have also $I_Q=I_Z:I_G.$ 

Therefore many properties of almost complete intersections can be deduced by properties of aG schemes. Since in case of codimension $c=3$ we have the Eisenbud Buchsbaum structure theorem for Gorenstein algebras (see \cite{BE}), in this paper we will produce an analogous structure theorem for all almost complete intersections of depth $3$. We will give also a graded version of this structure theorem. Moreover, since again in codimension $c=3$  by Diesel paper (see \cite{Di}) all graded Betti numbers for Gorenstein graded algebras are characterized, we will describe all possible Betti sequences for almost complete intersections of depth $3$. For all of this we need to fix some terminology and to remind some known facts about free resolutions, in particular, for Gorenstein algebras and about pfaffians of alternating matrices. Moreover, since the Betti sequences are simply sequences of finite multisets of positive integers, we need also to fix terminology and basic facts on multisets. 

We start just with basic notation and properties of multisets. 

Let $A$ be a set. A {\em multiset} on $A$ is a function $M:A \to \mathbb{Z}^+;$ for every $a \in A$ the integer $M(a)$ will be called the {\em multiplicity} of $a$ in $M$ and we will denote it also with $\mu_M(a).$ When $M$ is a multiset the domain of $M$ is called the {\em support} of $M$ and we denote it by $\Supp M.$ Whenever $b \notin \Supp M$ we will set $\mu_M(b)=0.$

Sometimes, for simplicity, we will denote the multiset $M$ with a finite support by $M= \{\{m_1,m_2, \ldots, m_t\}\},$ where in the list each element of the support of $M$ appears as many times as its multiplicity in $M,$ and sometimes we will use the notation  $M= \left\{a_1^{(\mu_M(a_1))},a_2^{(\mu_M(a_2))}, \ldots, a_r^{(\mu_M(a_r))}\right\},$ where $a_i$ are all the elements in $\Supp M.$ We define $|M|=\sum_{a \in \Supp M}\mu_M(a).$ Of course, if $M$ is a multiset with $\mu_M(a)=1$ for all elements in $\Supp M$ we can identify the multiset $M$ with its support.

Now we will remind the definitions concerning the main operations on multisets. If $M$ and $N$ are two multisets we will denote
$$M \cap N: \Supp M \cap \Supp N \to \mathbb{Z}^+,  \quad \textrm{where}  \quad (M \cap N)(y)=\min\{\mu_M(y),\mu_N(y)\},$$
$$M \cup N: \Supp M \cup \Supp N \to \mathbb{Z}^+,  \quad \textrm{where}  \quad (M \cup N)(y)=\max\{\mu_M(y),\mu_N(y)\},$$
$$M \sqcup N: \Supp M \cup \Supp N \to \mathbb{Z}^+,  \quad \textrm{where}  \quad (M \sqcup N)(y)=\mu_M(y)+\mu_N(y),$$
if we set $\Supp (M\setminus N)=\{x \in \Supp M \ |\ \mu_M(x)> \mu_N(x)\}$  then
$$M \setminus N: \Supp (M\setminus N) \to \mathbb{Z}^+,  \quad \textrm{where}  \quad (M \setminus N)(y)=\mu_M(y)-\mu_N(y);$$
moreover, we will say that $M$ is a submultiset of $N$ and we will write $M \subseteq N$ whenever $\Supp M \subseteq \Supp N$ and for every $y \in \Supp M$ $\mu_M(y)\le \mu_N(y).$  Of course, when $M \subseteq N$ then $N=M \sqcup (N\setminus M).$

In the sequel when $M$ is a finite multiset of integers we define $\|M\|=\sum_{y\in \Supp M}\mu_M(y)y.$ Furthermore, if $n$ is also an integer, we define the multiset $n\pm M$ by 
$$n\pm M: n\pm \Supp M \to \mathbb{Z}^+,  \quad \textrm{where} \quad  (n\pm M)(n\pm y)=\mu_M(y) .$$

\begin{prp}\label{dual}
If $M$ is a multiset of integers, $n$ is  an integer and $H:=M \cap (n-M),$ then  for every $x\in \Supp H$ $\mu_H(x)= \mu_H(n-x).$
\end{prp}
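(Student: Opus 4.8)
The plan is to reduce everything to the definition of multiplicities and to exploit the symmetry of the $\min$ operation. First I would unwind the definition of the reflected multiset $n-M$: from the defining relation $(n-M)(n-y)=\mu_M(y)$, substituting $y=n-z$ gives $\mu_{n-M}(z)=\mu_M(n-z)$ for every integer $z$. This is the only step that requires any care, since the defining formula for $n-M$ is stated on the shifted argument $n-y$ rather than directly on $z$.

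Next, using the definition of the intersection of two multisets, I would compute, for an arbitrary integer $x$,
$$\mu_H(x)=\min\{\mu_M(x),\mu_{n-M}(x)\}=\min\{\mu_M(x),\mu_M(n-x)\}.$$
Replacing $x$ by $n-x$ and using $n-(n-x)=x$ then yields
$$\mu_H(n-x)=\min\{\mu_M(n-x),\mu_M(x)\}.$$
Since $\min$ is symmetric in its two arguments, the two right-hand sides coincide, and hence $\mu_H(x)=\mu_H(n-x)$ for every integer $x$, and in particular for every $x\in\Supp H$.

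I would close by remarking that the identity in fact holds unconditionally, not merely on the support: if $x\in\Supp H$ then $\mu_H(x)>0$ forces both $\mu_M(x)>0$ and $\mu_M(n-x)>0$, so by the same computation $n-x\in\Supp H$ as well, showing that $\Supp H$ is itself stable under the reflection $x\mapsto n-x$. The main (and very mild) obstacle is purely bookkeeping, namely correctly tracking the argument substitution in the definition of $n-M$; once that is in place, the conclusion is immediate from the commutativity of $\min$.
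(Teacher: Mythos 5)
Your proof is correct and follows essentially the same route as the paper's: both arguments unwind the defining relation of $n-M$ to get $\mu_{n-M}(z)=\mu_M(n-z)$ and then conclude by the symmetry of $\min$ in the computation of $\mu_H(x)$ and $\mu_H(n-x)$. The closing remark that $\Supp H$ is stable under $x\mapsto n-x$ is a harmless (and correct) addition beyond what the paper states.
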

\begin{proof} Of course, for every $x \in \Supp M$ $\mu_M(x)= \mu_{n-M}(n-x).$ Then for every  $x\in \Supp H$ we have 
\begin{multline*}\mu_H(x)=\min\{\mu_M(x),\mu_{n-M}(x)\}=\\ =\min\{\mu_{n-M}(n-x),\mu_{M}(n-x)\}=\mu_H(n-x).\end{multline*}
\end{proof}

Let now $R=k[x_0,\ldots,x_r]$ and $I_X$ the defining (saturated) ideal of a projective scheme $X \subset \pp^r_k.$ We recall that the  standard graded $k$-algebra $R/I_X$ admits a graded minimal free resolution of the following type
$$0\to\bigoplus_jR(-j)^{\beta_{pj}}\to\ldots\to\bigoplus_jR(-j)^{\beta_{0j}}\to R\to R/I_X\to 0$$
which, if we restrict ourselves to the $\beta_{ij}\ne 0$, can be written as
 $$0\to\bigoplus_{h\in \Supp M_p}R(-h)^{\mu_p(h)}\to\ldots\to\bigoplus_{h\in \Supp M_1}R(-h)^{\mu_1(h)}\to R\to R/I_X\to 0$$
where each $M_i$ is a multiset of positive integers and $\mu_i(h):=\mu_{M_i}(h).$  
 We will call $(M_1,\ldots,M_p)$ the {\em Betti sequence} of $X$ or also of $R/I_X,$ and we will denote it by $\beta_X$ or $\beta_{R/I_X}.$ 

For simplicity, from now on for every multiset of positive integers $M$ we will set 
$$\bigoplus_{h\in M}R(-h):=\bigoplus_{h\in \Supp M}R(-h)^{\mu_M(h)}$$
hence, the resolution can be written 
$$0\to\bigoplus_{h\in M_p}R(-h)\to\ldots\to\bigoplus_{h\in M_1}R(-h)\to R\to R/I_X\to 0$$
Now, since Gorenstein algebras of depth $3$ can be described by alternating matrices, we establish some terminology  for such matrices. 

Let $R$ be any commutative ring. 

If $i\ne j$ are positive integers we set $\langle i,j \rangle$ the following integer
$$\langle i,j \rangle=\begin{cases} i+j+1 & \text{if} \ i<j \\
i+j & \text{if} \ i>j\end{cases} $$
If $M=(a_{ij})$ is an alternating matrix of size $m$ with entries in $R$ we will denote by $M_{\widehat{ij}}$ the alternating matrix obtained from $M$ by deleting both rows and columns $i$ and $j.$ With this terminology when $m$ is even one can easily verify that the pfaffian of $M$ (for definitions and basic facts on phaffians see for instance \cite{IK} Appendix B) can be computed, for every $i=1, \ldots, m,$ by
$$\pf M= \sum_j (-1)^{\langle i,j \rangle}a_{ij}\ \pf M_{\widehat{ij}}.  \quad \quad (1)$$

Moreover, if  $M$ is an alternating matrix of size $m,$ with $m$ even, we will denote by $\overline{M}=(\overline{a}_{ij}),$ where  $\overline{a}_{ij}=(-1)^{\langle i,j \rangle}\pf M_{\widehat{ij}},$ the pfaffian adjoint of $M,$ which is clearly an alternating matrix.

\begin{rem}\label{lpl}
If 	
$$M=\begin{pmatrix}
0 & a & | & B \\
-a & 0 & | & \\
-- & -- & -|-  & --\\
-{}^tB & & | & C\\	
\end{pmatrix}$$
is an even alternating  matrix, by repeating the previous formula for $i=1,2,$ one can show that its pfaffian can be computed as
$$\pf M =a\ \pf C + \pf (B \ \overline{C}\ {}^tB)$$
where $\overline{C}$ is the pfaffian adjoint of $C$ as defined above.
\end{rem}

Let $\psi: F^{\vee} \to F$ be an alternating map with $F$ a free $R$-module of rank even $m.$  If $B$ is a basis for $F$ and $B^{\vee}$ the dual basis for $F^{\vee},$ if $M$ is the matrix associated to $\psi$ with respect to such bases, we will denote by $\overline{\psi}: F \to F^{\vee}$  the map whose associated matrix with respect to the previous bases is the pfaffian adjoint of $M,$ i.e. $M(\overline{\psi})=\overline{M}.$  Note that $\overline{\psi}\psi= (\pf \psi) \idt_{F^{\vee}}$ and $\psi \overline{\psi}= (\pf \psi) \idt_{F}.$ Moreover we will write $\Pf_{s}(\psi)$ for the ideal generated by th pfaffiani di $\psi$ of order $s.$

Furthermore, with the same notation, when  the rank $m$ is odd and $\{p_1, \ldots, p_m\}$ are the $m$ submaximal pfaffians of the matrix $M=M(\psi)$ (with respect to fixed bases $B$ and $B^{\vee}$), if $p \in I=(p_1, \ldots, p_m),$ then there exists an alternating map $\widetilde{\psi}: G^{\vee} \to G,$ where $G$ is a free module of rank $m+2,$  such that the alternating matrix $\widetilde{M}=M(\widetilde{\psi})$ (with respect to suitable bases $\widetilde{B}$ and $\widetilde{B}^{\vee}$) has the $m+2$ submaximal pfaffians $\{p_1, \ldots, p_m, p, 0\}.$ Indeed, if $p=a_1p_1+ \ldots + a_mp_m,$ it is enough to take $G=F\oplus R^2$ and as $\widetilde{\psi}: G^{\vee} \to G,$ the alternating map which, respect to  given bases $\widetilde{B}$ and $\widetilde{B}^{\vee},$ is defined by the alternating matrix 
$$\widetilde{M}=\begin{pmatrix}
 &  &  & | & 0 & a_1 \\
 & M & & | & \vdots & \vdots\\
  &  & & | & 0 & a_m \\
-- & -- &-- & --|-- & --& -- \\
0 & \cdots & 0 & | & 0 & -1 \\
-a_1 & \cdots  & -a_m & | & 1 & 0 \\	
\end{pmatrix}$$
To complete the assertion it is enough to compute the submaximal pfaffians of $\widetilde{M}$, using the above formula $(1)$ with respect to the $(m+1)$-th column for all pfaffians except for the $(m+1)$-th pfaffian  for which we use formula $(1)$ with respect to the last column. Applying the previous observation we get the following 

\begin{lem}\label{3gen}
Let $R$ be a noetherian local ring and $I_G$ a Gorenstein ideal of depth $3$ in $R$ and $p_1,p_2,p_3\in I_G.$ Then there exists an alternating map 
$\varphi: H^{\vee} \to H$ of odd rank $m$ such that $\Pf_{m-1}(\varphi)=I_G$ and $3$ of the submaximal pfaffians of $\varphi$ are exactly  $p_1,p_2,p_3.$
\end{lem}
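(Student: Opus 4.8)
The plan is to invoke the Buchsbaum--Eisenbud structure theorem once and then iterate, three times, the construction described just above, one application for each of $p_1,p_2,p_3$.

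First I would apply the structure theorem for Gorenstein ideals of depth $3$ (see \cite{BE}): there is an alternating map $\psi_0: F_0^{\vee}\to F_0$ of odd rank $n$ whose submaximal pfaffians $q_1,\dots,q_n$ generate $I_G$, so that $\Pf_{n-1}(\psi_0)=I_G$. The key remark is that the construction preceding the lemma takes an alternating map of odd rank $m$ whose submaximal pfaffians are $\{p_1,\dots,p_m\}$ together with an element $p$ lying in the ideal $(p_1,\dots,p_m)$, and returns an alternating map of odd rank $m+2$ whose submaximal pfaffians are exactly $\{p_1,\dots,p_m,p,0\}$. Thus each application enlarges the matrix by $2$ while keeping the rank odd, preserves all the old submaximal pfaffians, appends the chosen element $p$ and a single $0$, and in particular does not change the ideal generated by the submaximal pfaffians, since $p$ already belongs to it and the extra entry is $0$.

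Now I would run this construction three times. Since $p_1\in I_G=(q_1,\dots,q_n)$, applying it to $\psi_0$ and $p_1$ yields $\psi_1$ of odd rank $n+2$ with submaximal pfaffians $\{q_1,\dots,q_n,p_1,0\}$; as these still generate $I_G$, we have $\Pf_{n+1}(\psi_1)=I_G$. Because $p_2\in I_G$ is a combination of the submaximal pfaffians of $\psi_1$, a second application produces $\psi_2$ of odd rank $n+4$ with submaximal pfaffians $\{q_1,\dots,q_n,p_1,0,p_2,0\}$, again generating $I_G$. A third application, using $p_3\in I_G$, produces $\varphi:=\psi_3: H^{\vee}\to H$ of odd rank $m=n+6$ whose submaximal pfaffians are $\{q_1,\dots,q_n,p_1,0,p_2,0,p_3,0\}$. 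By construction $\Pf_{m-1}(\varphi)=(q_1,\dots,q_n)=I_G$ and three of the submaximal pfaffians of $\varphi$ are exactly $p_1,p_2,p_3$, which is the assertion.

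The argument has essentially no hard step: all the real content is packaged in the auxiliary construction already verified in the excerpt. The only point I would state explicitly at each stage is that the element to be appended must lie in the ideal generated by the \emph{current} submaximal pfaffians; this is guaranteed throughout because that ideal is always $I_G$ and each $p_i$ belongs to $I_G$ by hypothesis. Finally, since we start from an odd rank and add $2$ at each of the three steps, the final rank $m$ is again odd, as required.
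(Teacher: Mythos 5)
Your proof is correct and follows essentially the same route as the paper: start from the Buchsbaum--Eisenbud structure theorem to get an alternating map of odd rank $n$ with $\Pf_{n-1}=I_G$, then apply the bordering construction preceding the lemma three times (once per $p_i$), each step adding $2$ to the rank and appending $p_i$ and $0$ to the submaximal pfaffians without changing the ideal, ending at odd rank $m=n+6$. The paper's own proof is exactly this, stated in one line; your explicit check that each $p_i$ lies in the ideal generated by the \emph{current} pfaffians is the only point the paper leaves implicit, and you handle it correctly.
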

\begin{proof}
Take any alternating map $\psi: F^{\vee} \to F$ of rank odd $n$ such that $\Pf_{n-1}(\psi)=I_G=(g_1, \ldots g_n),$ then apply the previous observation to get a new alternating map  $\varphi: H^{\vee} \to H$ of odd rank $m=n+6$ such that $\Pf_{m-1}(\varphi)=I_G=(g_1, \ldots g_n,p_1,p_2,p_3, 0,0,0).$ 
\end{proof}

We need also the following lemma.

\begin{lem}
Let $M$ be an alternating matrix of odd size $m,$ with entries in a unitary commutative ring $R.$ Let $I$ be the ideal generated by the submaximal pfaffians of $M$ and let $(q_1,\ldots,q_m)$ be a set of generators of $I.$ Then there exists an alternating matrix whose submaximal pfaffians are $uq_1,\ldots,uq_m,$ where $u$ is a unit of $R.$ 
\end{lem}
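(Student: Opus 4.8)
The plan is to describe how the vector of submaximal pfaffians of an odd alternating matrix changes under a congruence $M\mapsto P\,M\,{}^tP$, and then to choose $P$ so that the submaximal pfaffians $p_1,\dots ,p_m$ of $M$ are carried onto unit multiples of the prescribed generators $q_1,\dots ,q_m$. I write $\mathbf p=(p_1,\dots ,p_m)$ and $\mathbf q=(q_1,\dots ,q_m)$, so that $(p_1,\dots ,p_m)=I=(q_1,\dots ,q_m)$, and I denote by $\operatorname{adj}A$ the classical adjugate, $A\,\operatorname{adj}A=(\det A)\operatorname{id}$.

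First I would establish the transformation law. For a column $x$ form the bordered alternating matrix $\widetilde M_x=\begin{pmatrix} M & x\\ -{}^tx & 0\end{pmatrix}$ of even size $m+1$; expanding $\pf\widetilde M_x$ along the last column by formula $(1)$ gives a linear form $\pf\widetilde M_x=\sum_k\varepsilon_k x_k p_k$, whose coefficients are the submaximal pfaffians of $M$ up to the fixed signs $\varepsilon_k$ produced by $(1)$. Passing to the signed vector $\boldsymbol\rho_M=(\varepsilon_k p_k)_k$ removes these signs from the bookkeeping. For invertible $P$ one has $\operatorname{diag}(P,1)\,\widetilde M_x\,\operatorname{diag}({}^tP,1)=\widetilde N_{\,Px}$ with $N=P\,M\,{}^tP$, and the basic identity $\pf(Q A\,{}^tQ)=\det(Q)\pf A$ yields $\pf\widetilde N_{\,Px}=\det(P)\,\pf\widetilde M_x$ for every $x$. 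Comparing the two linear forms gives ${}^tP\,\boldsymbol\rho_N=\det(P)\,\boldsymbol\rho_M$, that is
$$\boldsymbol\rho_N=\operatorname{adj}({}^tP)\,\boldsymbol\rho_M .$$

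Next I would build $N$. Each $p_j$ lies in $I=(q_1,\dots ,q_m)$, so $p_j=\sum_i d_{ji}q_i$; in the signed coordinates (writing $\boldsymbol\rho_{\mathbf q}=(\varepsilon_k q_k)_k$ and replacing the coefficient matrix by its conjugate under the fixed sign matrix $E=\operatorname{diag}(\varepsilon_k)$, which alters neither its determinant nor its invertibility) this becomes $\boldsymbol\rho_M=D\,\boldsymbol\rho_{\mathbf q}$ for a matrix $D$. Taking $P$ with ${}^tP=D$ and $N=P\,M\,{}^tP$, the transformation law gives
$$\boldsymbol\rho_N=\operatorname{adj}(D)\,\boldsymbol\rho_M=\operatorname{adj}(D)\,D\,\boldsymbol\rho_{\mathbf q}=(\det D)\,\boldsymbol\rho_{\mathbf q};$$
passing back to unsigned coordinates, $N$ is an alternating matrix of size $m$ whose submaximal pfaffians are $(\det D)q_1,\dots ,(\det D)q_m$, so $u=\det D$.

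The remaining point, which I expect to be the only real obstacle, is to ensure that $u=\det D$ is a unit, equivalently that $D$ may be chosen invertible. This amounts to saying that the two generating sets $\{p_j\}$ and $\{q_i\}$, both of cardinality $m$, are connected by an invertible change of generators. Over the local ring $R$ this is exactly Nakayama's lemma applied to minimal generating sets: if $\mathbf q=B\mathbf p$ and $\mathbf p=D\mathbf q$, then reducing modulo the maximal ideal $\gothic m$ the matrices $D$ and $B$ become mutually inverse over the residue field, whence $\det D\notin\gothic m$ and $D\in GL_m(R)$. Establishing this invertibility is the delicate step — for two arbitrary equinumerous generating sets of an ideal no invertible passage need exist — whereas the pfaffian computation above is routine once the transformation law is in hand; it is precisely this step that forces the unit $u$ in place of $1$.
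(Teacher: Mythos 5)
Your construction is essentially the paper's own proof: the paper also passes to the congruent matrix $AM\,{}^t\!A$ and uses the adjugate transformation law $\pf(AM\,{}^t\!A)=(\pf M)\operatorname{adj}(A)$, obtaining the unit as $\det A$. Your bordered-matrix derivation of that law (the paper dismisses it as ``a straightforward computation'') is correct, signs included. The divergence is exactly at the step you yourself flag as the only real obstacle: the paper simply \emph{asserts} ``${\bf p}={\bf q}A$, where $A$ is an invertible square matrix'' and moves on, whereas you try to prove it --- and your argument for it is wrong.

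Here is the gap. From ${\bf p}=D{\bf q}$ and ${\bf q}=B{\bf p}$ you only get $(I-DB){\bf p}=0$, i.e.\ the rows of $I-DB$ are syzygies of ${\bf p}$. To conclude $DB\equiv I \pmod{\gothic{m}}$ you need every syzygy of ${\bf p}$ to have all coefficients in $\gothic{m}$, which holds precisely when $p_1,\ldots,p_m$ is a \emph{minimal} generating set --- and neither $\{p_j\}$ nor $\{q_i\}$ is minimal here. Submaximal pfaffian vectors are typically non-minimal: the construction preceding Lemma~\ref{3gen} in this very paper produces alternating matrices whose pfaffian vector contains a redundant generator and even a zero entry. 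Concretely, take $I=(x)$ with $x\in\gothic{m}$, ${\bf p}={\bf q}=(x,x,x)$ (the pfaffians of the $3\times 3$ alternating matrix with all upper entries $x$), and let $D=B$ be the matrix all of whose rows are $(1,0,0)$: your two relations hold, yet $DB=D\not\equiv I\pmod{\gothic{m}}$ and $\det D=0$. So nothing forces an arbitrary pair $(D,B)$ to be mutually inverse modulo $\gothic{m}$; what must be proved is that an invertible $D$ can be \emph{chosen}. Over a local ring this is true but needs one more idea: from $\{q_i\}$ extract a subset whose images form a basis of $I/\gothic{m}I$ (hence a minimal generating set), and use invertible row operations to carry ${\bf q}$ to $({\bf q}',0,\ldots,0)$; do the same for ${\bf p}$; then apply your Nakayama argument to the two minimal generating sets ${\bf p}',{\bf q}'$ of equal cardinality $\nu(I)$, where it is valid; composing the three invertible matrices gives the required $D$. (Also, strictly speaking, your transformation law was derived for invertible $P$, so this invertibility should be secured before the computation of $\boldsymbol\rho_N$, or the law should be noted to be a polynomial identity in the entries of $P$.) Finally, note that some hypothesis beyond the lemma's ``unitary commutative ring'' is genuinely needed for this step, by you and by the paper alike: over $\mathbb{R}[x,y,z]/(x^2+y^2+z^2-1)$ the equinumerous generating sets $(x,y,z)$ and $(1,0,0)$ of the unit ideal are related by no invertible matrix, since the unimodular row $(x,y,z)$ completes to no invertible matrix; so the paper's asserted step, and hence this proof, is only valid in the local setting in which the lemma is actually applied.
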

\begin{proof}
Let ${\bf p}=(p_1\ldots p_m)$ be the vector of the submaximal pfaffians of $M$ and let ${\bf q}=(q_1\ldots q_m).$ Then ${\bf p}={\bf q}A,$ where $A$ is an invertible square matrix of size $m.$ 

Let us consider the alternating matrix $AM\:{}^t\!\!A.$  Now, if $N$ is an alternating matrix we write $\pf N$ for the vector of the submaximal pfaffians of $N$ and if $A$ is a square matrix we write $\overline{A}$ for the adjoint matrix of $A.$ A straightforward computation shows that
 $$\pf(AM\:{}^t\!\!A)=(\pf M)\overline{A}={\bf p}\overline{A}=(\det A){\bf q}.$$
Since $A$ is invertible, $\det A$ is a unit, so we are done. 
\end{proof}

Now recall that if $A_G=R/I_G$ is a Gorenstein graded algebra of depth $3$ ($R=k[x_0, \ldots, x_r]$), its graded minimal free resolution is of the type
$$0\to   R(-\vartheta_G) \to \bigoplus_{h \in \vartheta_G-H}R(-h) \to \bigoplus_{h \in H}R(-h) \to R \to A_G \to 0$$
where $H$ is the multiset of the degrees of the minimal generators of $I_G$ and $\vartheta_G=2\|H\|/ (|H|-1)$ (remind that $|H|$ must be odd). Hence the multiset $H$ determines all the graded Betti numbers of $A_G.$ Moreover, the integers in $H=\{\{h_1\le h_2 \le  \ldots \le h_{2m+1}\}\}$ must satisfy the following Gaeta-Diesel conditions (see \cite{Ga} and \cite{Di}): $ \vartheta_G$ is an integer and $\vartheta_G > h_{i+1} + h_{2m+2-i}$ for $i=1, \ldots, m.$
\par
In general when $T$ is a graded $R$-module, ($R$ any polynomial ring), and 
\begin{multline*} $$F_{\bullet} \quad 0\to\bigoplus_{h\in M_p}R(-h)\to\ldots\to\bigoplus_{h\in M_{i+1}}R(-h)\to \bigoplus_{h\in M_{i}}R(-h)\to\ldots \\ \ldots \to \bigoplus_{h\in M_1}R(-h)$$\end{multline*}
is a graded  free resolution of $T$ with each $M_j$ a multiset of positive integers, we say $(M_1, \ldots, M_p)$ the {\em multiset sequence} associated to the resolution  $F_{\bullet}.$ 

If $s\in \Supp M_i\cap \Supp M_{i+1},$  we say that $s$ is a {\em repetition} in the resolution. Moreover, if $N \subseteq M_i\cap M_{i+1}$ we say that $N$ is {\em cancellable} for $T$ if there is a graded free resolution of $T$ whose associated multiset sequence is $(M_1, \ldots,M_{i}\setminus N,M_{i+1}\setminus N, \ldots, M_p).$
 
Next proposition collects some numerical facts about the graded free resolutions of $3$-codimensional standard graded Gorenstein algebras.
\begin{prp}
Let 
 $$0\to\bigoplus_{h\in C}R(-h)\to\bigoplus_{h\in B}R(-h)\to\bigoplus_{h\in A}R(-h)\to R\to R/I_G\to 0$$
be a graded free resolution (not necessarily minimal)  of a depth $3$ Gorenstein graded algebra. 
\begin{itemize}
	\item[i.] $|B\cap C|=|C|-1.$ 
	\item[ii.] Let $\vartheta$ be the unique element in $\Supp (C\setminus B).$ If $s\in \Supp(A\cap B)$ is such that $\mu_{A\cap B}(s)> \mu_{A\cap B}(\vartheta-s)$ then the multiset $\{s^{(\mu_{A\cap B}(s)-\mu_{A\cap B}(\vartheta-s))}\}$ is cancellable for $R/I_G.$ 
\end{itemize}
\end{prp}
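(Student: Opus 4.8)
The plan is to reduce both statements to the structure of the minimal free resolution recalled above, via the standard fact that any graded free resolution of $R/I_G$ is the direct sum of its minimal free resolution and a trivial (split exact) complex. Since $R/I_G$ is cyclic with $F_0=R$, no split summand can lie over $F_0$, so there are multisets $P$ and $Q$ (the twists of the split summands inserted between the first and the second, respectively the second and the third, free modules) with
$$A=H\sqcup P,\qquad B=(\vartheta_G-H)\sqcup P\sqcup Q,\qquad C=\{\vartheta_G\}\sqcup Q,$$
where $H$ is the multiset of generator degrees and, by the self-duality of the minimal resolution, $\vartheta_G-H$ is the multiset of first syzygy degrees. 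I will use throughout that generator degrees are positive, so that $0\notin\Supp H$ and hence $\vartheta_G\notin\Supp(\vartheta_G-H)$.

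For (i) I would compute $C\setminus B$ directly. Every element of $Q$ occurs in $B$ with at least the multiplicity it has in $C$, because $Q\subseteq B$; hence no element of $\Supp Q$ contributes to $C\setminus B$. The only remaining class is the top twist $\vartheta_G$, coming from the minimal summand $R(-\vartheta_G)$: as $\vartheta_G$ strictly exceeds every generator and every first syzygy degree it is not matched by $B$, so it survives in $C\setminus B$ with multiplicity one. Thus $C\setminus B=\{\vartheta_G\}$, whence $|C\setminus B|=1$ and $|B\cap C|=|C|-|C\setminus B|=|C|-1$. This simultaneously identifies $\vartheta_G$ as the unique element of $\Supp(C\setminus B)$, legitimizing the notation $\vartheta=\vartheta_G$ used in (ii).

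For (ii) the guiding principle is that the minimal part of the resolution contributes \emph{symmetrically} to $A\cap B$: by self-duality (using $\mu_{\vartheta_G-H}(s)=\mu_H(\vartheta_G-s)$) its contribution to $\mu_{A\cap B}$ at a degree $s$ is $\min\{\mu_H(s),\mu_H(\vartheta-s)\}$, which is invariant under $s\leftrightarrow\vartheta-s$. Consequently this symmetric part cancels in the difference, and the aim is to show that the positive excess $\delta:=\mu_{A\cap B}(s)-\mu_{A\cap B}(\vartheta-s)$ is carried entirely by the split summand $P$ between $F_1$ and $F_2$, i.e. that $\{s^{(\delta)}\}\subseteq P$. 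Granting this, removing $\{s^{(\delta)}\}$ from both $A$ and $B$ deletes split (trivial) summands and hence again produces a graded free resolution of $R/I_G$ with the prescribed multiset sequence, which is precisely cancellability.

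The main obstacle I anticipate is establishing $\{s^{(\delta)}\}\subseteq P$, equivalently $\delta\le\mu_P(s)$. This is where the self-duality must be used most carefully: after cancelling the symmetric minimal contribution $\min\{\mu_H(s),\mu_H(\vartheta-s)\}$, one must verify that the asymmetry recorded by $\delta$ is manufactured only by the summand $P$, and not by the second family of split summands $Q$ sitting between $F_2$ and $F_3$. A case analysis on the two minima defining $\mu_{A\cap B}(s)$ and $\mu_{A\cap B}(\vartheta-s)$, again invoking $\mu_{\vartheta_G-H}(s)=\mu_H(\vartheta_G-s)$ and the positivity $0\notin\Supp H$, is what pins the excess inside the cancellable summand $P$ and closes the argument; the same positivity is what guaranteed in (i) that the top class $\vartheta_G$ is never matched away.
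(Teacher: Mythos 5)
Your strategy is the same as the paper's: decompose the non-minimal resolution as (minimal resolution) $\oplus$ (trivial complexes), then use the rank-one last syzygy module for (i) and self-duality for (ii); the paper's own proof is just a two-sentence appeal to these same two facts. The trouble is that your argument for (ii) is never actually carried out, and the step you defer --- the containment $\{s^{(\delta)}\}\subseteq P$ --- is false. With your notation one has $\mu_{A\cap B}(s)=\mu_P(s)+\min\{\mu_H(s),\,\mu_H(\vartheta-s)+\mu_Q(s)\}$: the minimal part does \emph{not} contribute the symmetric quantity $\min\{\mu_H(s),\mu_H(\vartheta-s)\}$, because the minimum entangles $H$ with $Q$, so the ``symmetric part cancels in the difference'' principle breaks down and the excess $\delta$ need not be carried by $P$. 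Concretely, take the paper's own example of five general points, $H=\{2^{(5)}\}$, $\vartheta_G=5$, and add one trivial summand $R(-2)$ between the \emph{second and third} free modules, i.e. $P=\emptyset$, $Q=\{2\}$: then $A=\{2^{(5)}\}$, $B=\{2,3^{(5)}\}$, $C=\{2,5\}$, so $s=2$ satisfies the hypothesis of (ii) with $\delta=1$, yet $\{2\}$ is not cancellable --- by the very splitting fact you invoke, the first multiset of any graded free resolution of $R/I_G$ must contain $H$, and $\{2^{(4)}\}\not\supseteq\{2^{(5)}\}$. So no case analysis can pin the excess inside $P$; indeed this is a counterexample to statement (ii) as literally written. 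Both the statement and your plan become correct under the extra hypothesis $Q=\emptyset$ (last module exactly $R(-\vartheta_G)$), which is what holds in the paper's applications, where the resolution comes from a linkage mapping cone: there your computation collapses to $\delta=\mu_P(s)-\mu_P(\vartheta-s)\le\mu_P(s)$ and removing $\delta$ trivial summands of degree $s$ finishes the proof.

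The same oversight leaves a pinhole in your (i): you argue that $\vartheta_G$ ``is not matched by $B$'' because it exceeds every generator and first-syzygy degree, but by your own decomposition $B$ also contains $P\sqcup Q$, and if $\vartheta_G\in\Supp P$ then the top twist \emph{is} matched, $C\setminus B=\emptyset$, and $|B\cap C|=|C|$ (add a trivial $R(-\vartheta_G)$ between the first and second modules of the minimal resolution to see both your argument and the statement fail). So (i) additionally requires $\vartheta_G\notin\Supp P$. In summary: you reproduce the paper's route and make visible exactly the points its two-line proof glosses over, but the submission proves neither part as it stands; the decisive step of (ii) is both unproven and unprovable without restricting where the trivial summands may sit.
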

\begin{proof}
i. It is enough to remind that in a minimal free resolution of a Gorenstein algebra the module of last syzygies has rank one.\\
ii. It is enough to remind that in a minimal free resolution of a depth $3$ Gorenstein algebra, by selfduality,  $\mu_{A \cap B}(s)=\mu_{A \cap B}(\vartheta-s).$
\end{proof}
Since we need  complete intersection ideals contained in Gorenstein ideals, we state some results from the paper~\cite{RZ2}. 
 
Let 
$$\beta=\big(\{\{d_1,\ldots, d_{2n+1}\}\},\{\{\vartheta-d_{2n+1},\ldots,\vartheta-d_1\}\},\{\vartheta\}\big),$$
$n\vartheta=\sum d_i,$ $d_1\le\ldots\le d_{2n+1},$ be a Betti sequence admissible for an depth $3$  standard graded Gorenstein algebra. 
Let
 $$\CI^g_{\beta}=\{\mathbf{a}\in\Nm^3\mid\exists\mbox{ an ideal }I\subset R \mbox{ containing a regular sequence of type }\mathbf{a}$$ $$\mbox{ with }{\beta}_{R/I}=\beta, \, R/I \mbox { Gorenstein ring}\},$$
where $$\Nm^3=\{(a_1,a_2,a_3)\in{\mathbb N}^3\mid a_1\le a_2\le a_3\}.$$
 
In ~\cite{RZ2} Theorem 3.6 it was shown that the poset $\CI^g_{\beta}$ has only one minimal element  and it was computed. We report here that statement. 

\begin{thm}\label{c3b}
Let $\beta=\big(\{\{d_1,\ldots, d_{2n+1}\}\},\{\{\vartheta-d_{2n+1},\ldots,\vartheta-d_1\}\},\{\vartheta\}\big),$
$n\vartheta=\sum d_i,$ $d_1\le\ldots\le d_{2n+1},$ be a Betti sequence admissible for an Artinian Gorenstein quotient of $k[x_1,x_2,x_3]$ and define the sets
$$B=\{3\le i\le n+1\mid\vartheta\le d_i+d_{2n+4-i}\}$$
and
 $$C=\{4\le i\le n+2\mid\vartheta\le d_i+d_{2n+5-i}\}.$$
Then $\CI^g_{\beta}$ has a unique minimal element which we will call $\mci\beta.$
Precisely, 
\begin{itemize}
\item [i)] if $B \ne \emptyset,$ then $\mci\beta=(d_1,d_{\max B}, d_{2n+4-\min B});$
\item [ii)] if $B = \emptyset$ and $C \ne \emptyset,$ then $\mci\beta=(d_1,d_2, d_{\max C});$
\item [iii)] if $B = \emptyset$ and $C = \emptyset,$ then $\mci\beta=(d_1,d_2, d_3).$
\end{itemize}
In particular $\CI^g_{\beta}=\{\mathbf{b}\in\Nm^3\mid\mathbf{b}\ge\mci\beta\}.$ 
\end{thm}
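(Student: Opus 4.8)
The plan is to prove two things: that $\CI^g_{\beta}$ is upward closed in $\Nm^3$, and that it possesses a least element computed case by case. Throughout I would first pass to the Artinian situation, since cutting by general linear forms preserves the graded Betti numbers of a codimension $3$ Cohen--Macaulay algebra and carries a complete intersection inside $I_G$ to one inside the Artinian reduction; there is thus no loss in assuming $R=k[x_1,x_2,x_3]$ with $R/I_G$ Artinian, which is exactly the regime in which $\beta$ is admissible. The upward closure is the routine half: if $\mathbf a\in\CI^g_{\beta}$ is witnessed by a Gorenstein $I_G$ with $\beta_{R/I_G}=\beta$ and a regular sequence $f_1,f_2,f_3\in I_G$ of degrees $a_i$, and if $\mathbf b\ge\mathbf a$, then keeping $I_G$ fixed I would raise the degrees one unit at a time, replacing $f_i$ by $\ell f_i$ for a general linear form $\ell$. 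This again lies in $I_G$ and remains a regular sequence, because modulo the other two elements the ring is Cohen--Macaulay of dimension $1$, so both $\ell$ and $f_i$ avoid its minimal primes and hence so does their product. Iterating and reordering yields a regular sequence of type $\mathbf b$ inside the \emph{same} $I_G$, so $\mathbf b\in\CI^g_{\beta}$.

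The substance is to pin down the least element, and here I would use the Buchsbaum--Eisenbud presentation $I_G=\Pf_{2n}(\psi)$ with $\psi$ an alternating $(2n+1)\times(2n+1)$ matrix whose $(i,j)$ entry has degree $\vartheta-d_i-d_j$. In a minimal resolution this entry must vanish whenever $d_i+d_j\ge\vartheta$, so the forced-zero locus of $\psi$ is an up-right staircase, and the sets $B$ and $C$ record precisely where the antidiagonals $i+j=2n+4$ and $i+j=2n+5$ enter that staircase. For the lower bound I would argue that a regular sequence $f_1,f_2,f_3\subseteq I_G$ of small degrees is obstructed by this zero pattern: any element of degree below $d_i$ lies in the ideal generated by the generators $g_1,\dots,g_{i-1}$ of degree $<d_i$, and when the staircase is large these low-degree generators are confined, through the vanishing of the corresponding submaximal pfaffians (computed via Remark \ref{lpl}), to an ideal too small to contain an independent pair, respectively triple. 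Translating the size of the staircase into degree inequalities gives $a_1\ge d_1$ always, $a_2\ge d_{\max B}$ when $B\ne\emptyset$, and the stated bound on $a_3$ in terms of $\min B$ or $\max C$; when both $B$ and $C$ are empty the staircase is small enough that $g_1,g_2,g_3$ already work, giving case iii).

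For realizability I would run this in reverse. Taking $\psi$ generic subject to the forced-zero pattern, admissibility of $\beta$ (the Gaeta--Diesel inequalities) guarantees that the submaximal pfaffians have the prescribed degrees and generate a height $3$, hence Gorenstein, ideal with Betti sequence exactly $\beta$. I would then verify that the three designated submaximal pfaffians — $g_1,g_{\max B},g_{2n+4-\min B}$ in case i), $g_1,g_2,g_{\max C}$ in case ii), and $g_1,g_2,g_3$ in case iii) — form a regular sequence for the generic such $\psi$, checking the successive nonzerodivisor conditions by prime avoidance. Combined with upward closure, the exhibited triple is then simultaneously a lower bound for and an element of $\CI^g_{\beta}$, so it is the unique minimal element $\mci\beta$, and $\CI^g_{\beta}=\{\mathbf b\in\Nm^3\mid\mathbf b\ge\mci\beta\}$ follows.

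I expect the main obstacle to be the lower bound, and specifically the bookkeeping that matches the combinatorics of $B$ and $C$ to the three regimes: showing that the forced vanishing of submaximal pfaffians really prevents a smaller regular sequence for \emph{every} admissible $\psi$, not merely the generic one, and that the bound on $a_3$ switches correctly between $d_{2n+4-\min B}$ and $d_{\max C}$ according to whether $B$ is empty. A possibly cleaner route for this step is through liaison: the mapping cone over the Koszul complex of $(f_1,f_2,f_3)$ resolves the linked almost complete intersection $I_Z:I_G$, and requiring this resolution to be that of a genuine codimension $3$ ideal with the generator count forced by $\beta$ produces the same inequalities; the delicate point there is controlling which cancellations in the mapping cone are admissible.
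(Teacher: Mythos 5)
You should know at the outset that this paper contains no proof of Theorem \ref{c3b}: the statement is quoted verbatim from \cite{RZ2}, Theorem 3.6 ("we report here that statement"), so your attempt must be judged as a reconstruction of that external argument rather than against an internal proof. Judged that way, parts of it are solid. The upward-closure step is correct and complete: fixing $I_G$ and replacing $f_i$ by $\ell f_i$ for a general linear form $\ell$ preserves regularity by the prime-avoidance argument you give, and iteration realizes any $\mathbf b\ge\mathbf a$ inside the same ideal. Your setup for the rest is also the right one: in the Buchsbaum--Eisenbud resolution the $(i,j)$ entry of the alternating matrix has degree $\vartheta-d_i-d_j$, hence must vanish in a minimal resolution whenever $d_i+d_j\ge\vartheta$, and the sets $B$ and $C$ do record where the antidiagonals $i+j=2n+4$ and $i+j=2n+5$ meet this forced-zero staircase.

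Everything after that, however, is a statement of intent rather than a proof, and the missing part is precisely the content of the theorem. The lower bound --- that \emph{every} regular sequence of type $(a_1,a_2,a_3)$ in \emph{every} $I_G$ with $\beta_{R/I_G}=\beta$ satisfies $a_2\ge d_{\max B}$ and $a_3\ge d_{2n+4-\min B}$ when $B\ne\emptyset$, with the switch to $d_{\max C}$ when $B=\emptyset$ --- is exactly your unproved assertion that the low-degree generators are ``confined \ldots to an ideal too small to contain an independent pair, respectively triple.'' To make that precise you would have to show that the zero block forces the subideal generated by the relevant generators to have height at most $2$, and carry out the index bookkeeping distinguishing $d_{\max B}$ from $d_{2n+4-\min B}$ and handling repeated degrees; none of this is done. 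Realizability of the claimed minimum is likewise only announced: ``I would then verify \ldots by prime avoidance'' is not a verification, and since the three designated submaximal pfaffians are specific forms rather than general elements of their degree pieces, prime avoidance does not apply to them directly --- one must actually prove that for a generic alternating matrix with the forced zero pattern those particular pfaffians form a regular sequence. You flag both points yourself as the ``main obstacle,'' which is an accurate self-assessment: without them the case analysis i)--iii), i.e.\ the theorem itself, remains unproved.
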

In the sequel we will need also the following proposition which is a reformulation of Lemma 3.7 of ~\cite{RZ2}.

\begin{prp}\label{BC}
Let $H$ be an admissible Hilbert function for an Artinian Gorenstein quotient of $k[x_1,x_2,x_3]$ and let $\beta=(D,\vartheta -D,\{\vartheta\})$ be a Gorenstein Betti sequence compatible with $H,$  with $D= \{\{d_1,\ldots, _{2n+1}\}\},$ $d_1 \le \ldots \le d_{2n+1}.$ Let $C$ be as in Theorem~\ref{c3b} and 
$$\overline{B}:=\{3\le i\le 2n+1\mid\vartheta\le d_i+d_{2n+4-i}\}.$$ We have
\begin{itemize}
  \item[a)] $i\in\overline{B}$ $\rw$ $\mu_D(d_i)=-\Delta^2 H(d_i).$
  \item[b)] $i\in C,$ $i\not\in\overline{B},$ $i-1\not\in\overline{B}$ $\rw$ $\mu_D(d_i)=-\Delta^2 H(d_i)-1.$
  \item[c)] $i,j\in\overline{B},$ $d_i=d_j$ $\rw$ $i=j.$
  \item[d)]If $\mu_D(d_i)=-\Delta^2 H(d_i)$ and $k=\min\{j\,|\ d_j=d_i\}$ then $k\in\overline{B}.$
  \item[e)]If $\mu_D(d_i)=-\Delta^2 H(d_i)-1$ and $k=\min\{j\,|\ d_j=d_i\}\le n+2$ then $k\in C.$
\end{itemize}
\end{prp}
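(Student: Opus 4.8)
The plan is to reduce everything to a single bookkeeping identity between the generator multiplicities $\mu_D$ and the second difference $\Delta^2 H$, and then to read off (a)--(e) as purely combinatorial statements about the sorted sequence $d_1\le\cdots\le d_{2n+1}$, governed by the Gaeta--Diesel inequalities recalled before Theorem~\ref{c3b}. First I would write the Hilbert series numerator coming from the self-dual resolution,
\[K(t)=1-\sum_{d\in D}t^{d}+\sum_{d\in D}t^{\vartheta-d}-t^{\vartheta},\qquad H(t)=K(t)/(1-t)^{3}.\]
Because we work over $k[x_1,x_2,x_3]$, dividing by one factor of $(1-t)$ gives $\sum_d\Delta^2H(d)\,t^d=K(t)/(1-t)$, so each coefficient of $\Delta^2H$ is a partial sum of the coefficients of $K$. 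Writing $g_{\le d}$ (resp. $g_{\ge c}$) for the number of minimal generators of degree $\le d$ (resp. $\ge c$), this yields, for $1\le d<\vartheta$,
\[-\Delta^2H(d)=g_{\le d}-g_{\ge \vartheta-d}-1 .\]

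Next I would reformulate (a) and (b). Put $\lambda(i)=g_{<d_i}$ and $\rho(i)=g_{\ge \vartheta-d_i}$, and note $\mu_D(d_i)=g_{\le d_i}-\lambda(i)$. Subtracting, the boxed identity collapses to
\[-\Delta^2H(d_i)-\mu_D(d_i)=\lambda(i)-\rho(i)-1 .\]
Thus (a) is exactly the statement $\lambda(i)=\rho(i)+1$ and (b) is $\lambda(i)=\rho(i)+2$. The whole problem is therefore to evaluate $\lambda(i)$ and $\rho(i)$ from the monotone sequence under the stated hypotheses, the only analytic input being the strict inequalities $\vartheta>d_p+d_{2n+3-p}$ for every index pair summing to $2n+3$.

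For the forward parts I would argue as follows. If $i\in\overline B$ then $d_{2n+4-i}\ge\vartheta-d_i$ puts all of $d_{2n+4-i},\dots,d_{2n+1}$ above the threshold $\vartheta-d_i$, while the Gaeta inequality $\vartheta>d_i+d_{2n+3-i}$ keeps $d_{2n+3-i}$ below it; hence $\rho(i)=i-2$ exactly. Applying Gaeta to the pair $\{i-1,2n+4-i\}$ forces $d_{i-1}<d_i$ (otherwise $\vartheta>d_{i-1}+d_{2n+4-i}=d_i+d_{2n+4-i}\ge\vartheta$), so $\lambda(i)=i-1$ and $\lambda-\rho-1=0$, which is (a); the strictness $d_{i-1}<d_i$ for $i\in\overline B$ gives (c) at once. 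For (b) all three hypotheses enter: $i\in C$ gives $d_{2n+5-i}\ge\vartheta-d_i$, $i\notin\overline B$ gives $d_{2n+4-i}<\vartheta-d_i$, so the threshold index is exactly $2n+5-i$ and $\rho(i)=i-3$; while $i-1\notin\overline B$ forces $d_{i-1}<d_i$, hence $\lambda(i)=i-1$ and $\lambda-\rho-1=1$.

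Finally, (d) and (e) are the converses, obtained by running the same computation backwards: the equality $\mu_D(d_i)=-\Delta^2H(d_i)$ (resp.\ $-1$) translates, at the first index $k$ realizing the value $d_i$ where $\lambda(k)=k-1$ automatically, into $\rho(k)=k-2$ (resp.\ $k-3$); this says the threshold $\min\{j:d_j\ge\vartheta-d_k\}$ equals $2n+4-k$ (resp.\ $2n+5-k$), i.e.\ $\vartheta\le d_k+d_{2n+4-k}$ (resp.\ $\vartheta\le d_k+d_{2n+5-k}$), which is membership in $\overline B$ (resp.\ $C$). The hard part will be the boundary: this reading is only legitimate while the threshold index lies in $[3,2n+1]$, that is $k\ge 3$ for (d) and $4\le k\le n+2$ for (e). The explicit bound $k\le n+2$ in (e) is precisely what makes $d_{2n+5-k}$ exist, and I would devote the main technical effort to checking that at the smallest indices the equality either cannot occur or is already accounted for, and to making sure each invocation of $\vartheta>d_p+d_{2n+3-p}$ is applied only to an admissible pair $\{p,2n+3-p\}$. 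Keeping these endpoint cases straight is the only genuine obstacle; the rest is the mechanical translation above.
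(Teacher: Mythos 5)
Your reduction is exactly the paper's bookkeeping: the identity $-\Delta^2H(d)=g_{\le d}-g_{\ge \vartheta-d}-1$ (for $d<\vartheta$) is what the paper uses implicitly when it writes $-\Delta^2 H(d_i)=\sum_{h\le d_i}\nu_h-\sum_{h\le d_i}\sigma_h-1$, and your counts $\lambda(i),\rho(i)$ coincide with its two partial sums. Your proofs of (a), (b), (c) are correct and complete, and they use the same Gaeta--Diesel pairs $\{i-1,2n+4-i\}$ and $\{i,2n+3-i\}$ as the paper; in fact your (c), deduced at once from the strictness $d_{i-1}<d_i$ valid for every $i\in\overline{B}$, is cleaner than the paper's own one-line argument for (c).

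The genuine gap is in (d) and (e), which you leave as a deferred ``boundary check,'' hoping that at the smallest indices the equality ``either cannot occur or is already accounted for.'' That hope cannot be realized, because the equality does occur there. Take $D=\{\{2,3,3,3,3\}\}$, $n=2$, $\vartheta=7$: this satisfies $\sum d_i=n\vartheta$ and both Gaeta--Diesel inequalities ($6<7$), so it is an admissible Gorenstein Betti sequence, with $H=(1,3,5,3,1)$; then $\mu_D(3)=4=-\Delta^2H(3)$ and $k=\min\{j\mid d_j=3\}=2$, yet $\overline{B}=\emptyset$ (and $d_{2n+4-k}=d_6$ does not even exist), so the conclusion of (d) fails. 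Likewise $D=\{\{3,3,4,4,4\}\}$, $\vartheta=9$, $H=(1,3,6,8,6,3,1)$ gives $\mu_D(4)=3=-\Delta^2H(4)-1$ with $k=3\le n+2$, while $C=\emptyset$, so (e) fails at its lower endpoint. So your mechanical translation $\rho(k)=k-2$ (resp.\ $k-3$) does yield $k\in\overline{B}$ (resp.\ $k\in C$), but only under the additional restriction $k\ge 3$ (resp.\ $k\ge 4$), and the excluded cases are realizable exceptions rather than vacuous ones. You should be aware that the paper's own proof has the same blind spot: it only establishes $k\ge 2$ (via $d_i>d_1$) and then asserts $\sum_{h\le d_k}\sigma_h=k-2\Rightarrow \vartheta-d_{2n+4-k}\le d_k\Rightarrow k\in\overline{B}$, which tacitly assumes $2n+4-k\le 2n+1$. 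So a complete and correct proof requires inserting the hypothesis $k\ge 3$ in (d) and $k\ge 4$ in (e) (or otherwise amending the statement); it is not obtainable by more careful case analysis alone, which is what your proposal promises to supply.
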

\begin{proof}
We set $\nu_h:=\mu_D(h)$ and $\sigma_h:=\mu_{\vartheta -D}(h).$
\begin{itemize}
    \item[a)]Let $i\in\overline{B};$ then $d_i\ge\vartheta-d_{2n+4-i}>d_{i-1};$ 
        furthermore $\vartheta-d_{2n+3-i}>d_i,$ so 
    $$-\Delta^2 H(d_i)=\sum_{h=1}^{d_i}\nu_{h}-\sum_{h=1}^{d_i}\sigma_{h}-1=(\nu_{d_i}+i-1)-(i-2)-1=\nu_{d_i}.$$           \item[b)]Let $i\in C\setminus\overline{B}$ such that $i-1\not\in\overline{B};$ then $d_i\ge\vartheta-d_{2n+5-i}>d_{i-1};$ furthermore, since $i\not\in\overline{B}$, $\vartheta-d_{2n+4-i}>d_i,$ so 
    $$-\Delta^2 H(d_i)=\sum_{h=1}^{d_i}\nu_{h}-\sum_{h=1}^{d_i}\sigma_{h}-1=(\nu_{d_i}+i-1)-(i-3)-1=\nu_{d_i}+1.$$  
    \item[c)]If $i<j$ then $d_i=d_{i+1}$ so we should have
    $$\vartheta\le d_i+d_{2n+4-i}=d_{i+1}+d_{2n+4-i}<\vartheta.$$
    \item[d)]We have $d_i>d_1,$ therefore $k>1$ and $d_k>d_{k-1};$ consequently $$\sum_{h=1}^{d_i-1}\nu_{h}=\sum_{h=1}^{d_k-1}\nu_{h}=k-1;$$
    moreover by hypotheses
    $$-\Delta^2 H(d_i)=\sum_{h=1}^{d_i}\nu_{h}-\sum_{h=1}^{d_i}\sigma_{h}-1=\nu_{d_i}\rw
    \sum_{h=1}^{d_k-1}\nu_{h}-\sum_{h=1}^{d_k}\sigma_{h}=1$$
    i.e.
    $$\sum_{h=1}^{d_k}\sigma_{h}=k-2\rw\vartheta-d_{2n+4-k}\le d_k\rw k\in\overline{B}.$$
    \item[e)]Again $d_i>d_1,$ therefore $k>1$ and $d_k>d_{k-1};$ consequently $$\sum_{h=1}^{d_i-1}\nu_{h}=\sum_{h=1}^{d_k-1}\nu_{h}=k-1;$$
    moreover by hypotheses
    $$-\Delta^2 H(d_i)=\sum_{h=1}^{d_i}\nu_{h}-\sum_{h=1}^{d_i}\sigma_{h}-1=\nu_{d_i}+1\rw
    \sum_{h=1}^{d_k-1}\nu_{h}-\sum_{h=1}^{d_k}\sigma_{h}=2$$
    i.e.
    $$\sum_{h=1}^{d_k}\sigma_{h}=k-3\rw\vartheta-d_{2n+5-k}\le d_k,$$
    hence $k\in C.$ 
\end{itemize}
\end{proof}

\begin{rem}\label{maxg}
We remind that by~\cite{RZ1}, Proposition 3.7, for every $d_i>d_1,$ $-\Delta^2 H(d_i)$ is equal to the largest number of minimal generators
of degree $d_i$ (which we will denote by $\Mng_H(d_i)$) compatible with an Artinian Gorenstein graded algebra of codimension $3$ having Hilbert function equal to $H.$
\end{rem}

\begin{rem}\label{Bvuoto}
Note that if $B=\emptyset$ then $\overline{B}\subseteq\{n+2\}.$ Indeed let $i\in\overline{B};$ since $B=\emptyset$ we have that $i\ge n+2;$ moreover $\vartheta\le d_i+d_{2n+4-i},$ so $2n+4-i\ge n+2,$ hence $i\le n+2$ i.e. $i=n+2.$
\end{rem}


\section{Structure theorem for almost complete intersections} 
\markboth{\it Structure theorem for almost complete intersections}
{\it Structure theorem for almost complete intersections}
In the sequel, if $f:M \to N$ and $g:M \to P$ are maps of modules we will write $(f,g): M \to N\oplus P$ for the map defined by $(f,g)(m)=(f(m),g(m)).$ Moreover, if 
$f:M \to P$ and $g:N \to P$ we will write $f|g: M\oplus N \to P$ for the map defined by $f|g(m,n)=f(m)+g(n).$
\par
Let $R$ be a Noetherian local commutative ring, $H_0$ a free $R$-module of odd rank $m_0\ge 5$ and $\varphi_0:H_0^{\vee}\to H_0$ an alternating map. Let $J:=\Pf_{m_0-1}(\varphi_0)$ be the ideal generated by the pfaffians of $\varphi_0$ of size $m_0-1$ (note that despite the fact that the pfaffians depend on the choice of the bases in $H_0,$ $J$ depends only on the map $\varphi_0$). Moreover, we denote by $\pf \varphi_0: H_0 \to R$ the map defined by the submaximal pfaffians of $\varphi_0.$ It is known by Buchsbaum-Eisenbud Theorem (see \cite{BE}) that $\depth J\le 3$ and we suppose here that $\depth J=3.$ Take a regular sequence $(p_1,p_2,p_3)$ in $J.$ By Lemma \ref{3gen} there exists an alternating map $\varphi: H^{\vee} \to H$ of odd rank $m$ such that $\Pf_{m-1}(\varphi)=J$ and $3$ of the submaximal pfaffians of $\varphi$ are exactly  $p_1,p_2,p_3.$ So $J=(p_1,p_2,p_3, \ldots, p_m),$ where the $p_i$'s, for $1\le i\le m,$ are the submaximal pfaffians of $\varphi.$
\par
Let $I=(p_1,p_2,p_3)$ and $\psi:G^{\vee}\to G$ an alternating map whose pfaffians are exactly $p_1,p_2,p_3.$ Then $H=G\oplus F,$ where $F$ is a free $R$-module of rank $m-3,$ and $\pf\varphi=\pf\psi|\sigma,$ where $\sigma:F\to R.$
Therefore we have the following decomposition: $\varphi=\big(\alpha\,|-\lambda^{\vee},\lambda|\beta\big),$ where $\alpha:G^{\vee}\to G,$ $\beta:F^{\vee}\to F$ and $\lambda:G^{\vee}\to F.$ Moreover we denote by 
$\overline{\beta}:F\to F^{\vee},$ the alternating map such that $\beta\overline{\beta}=p\idt$ and $\overline{\beta}\beta=p\idt,$ where $p$ is the pfaffian of the map $\beta.$ One can see that (with respect to suitable bases) the matrix associated to $\overline{\beta}$ is the pfaffian adjoint of the matrix of $\beta.$

[The pfaffian of an empty matrix will be $1$; note that  $\pf\psi^{\vee}=-(\pf\psi)$]

\begin{prp}\label{constr} With the above notation
$$\xymatrix@R=1.5cm@C=1.3cm{
0\ar[r]&F^{\vee}\ar[r]^(.4){(\lambda^{\vee},-\beta)}&G\oplus F\ar[rr]^{\big(p\, | \lambda^{\vee}\overline\beta,-\pf\psi\, |- \sigma\big)}&&G\oplus R\ar[r]^-{\pf\psi | p}&R}$$
is a free resolution of an  almost complete intersection algebra $R/Q.$
\end{prp}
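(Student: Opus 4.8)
The plan is to verify directly that the displayed sequence is a complex, that it is exact (hence a free resolution), and finally that the cokernel $R/Q$ is an almost complete intersection algebra, i.e.\ that $Q$ is a perfect ideal of depth $3$ minimally generated by $4$ elements, namely the entries of the last map $\pf\psi\,|\,p$. Throughout I would exploit the decomposition $\varphi=\big(\alpha\,|-\lambda^{\vee},\lambda\,|\,\beta\big)$ and the defining identities $\beta\overline\beta=\overline\beta\beta=p\,\idt_F$, together with the pfaffian expansion of Remark \ref{lpl}.

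First I would check that the composites vanish. The key computation is that $Q$, the ideal generated by the four entries of $\pf\psi\,|\,p$ (the three pfaffians of $\psi$ together with $p=\pf\beta$), is annihilated by the appropriate maps. Composing $\pf\psi\,|\,p$ with $\big(p\,|\,\lambda^\vee\overline\beta,\,-\pf\psi\,|\,-\sigma\big)$ yields a combination that simplifies to $\pf\psi\cdot p - p\cdot\pf\psi=0$ on the $G$-summand and, on the remaining summand, uses $(\pf\psi)\lambda^\vee\overline\beta=\sigma\beta\overline\beta=\sigma\, p$ up to sign, which forces the term to cancel; here the crucial ingredient is that $\pf\varphi=\pf\psi\,|\,\sigma$ is the vector of submaximal pfaffians of $\varphi$, so the Buchsbaum--Eisenbud relations $\varphi\cdot(\pf\varphi)^\vee=0$ expressed blockwise give precisely $\alpha(\pf\psi)^\vee-\lambda^\vee\sigma^\vee=0$ and $\lambda(\pf\psi)^\vee+\beta\sigma^\vee=0$, which are the identities I need. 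The next composite, of $\big(p\,|\,\lambda^\vee\overline\beta,\,-\pf\psi\,|\,-\sigma\big)$ with $(\lambda^\vee,-\beta)$, splits into checking $p\lambda^\vee-\lambda^\vee\overline\beta\beta = p\lambda^\vee-p\lambda^\vee=0$ and $-\pf\psi\cdot\lambda^\vee+\sigma\beta=0$; the latter is again one of the blockwise Buchsbaum--Eisenbud relations read off from $\pf\psi=\pf\alpha$ and the expansion of $\pf\varphi$ via Remark \ref{lpl}.

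Next I would establish exactness. Since all modules are free of the evident ranks and the alternating rank of $m-3$ for $\beta$ means $(\lambda^\vee,-\beta)$ is injective (its second component $\beta$ has pfaffian $p$, a nonzerodivisor as $\depth J=3$), the map at the left is injective. For exactness in the middle two spots I would apply the Buchsbaum--Eisenbud acyclicity criterion: I compute the expected ranks of the four maps and check that the depths of the ideals of appropriate minors (equivalently, of $Q$, of $I=(p_1,p_2,p_3)$, and of $(p)$) are at least the required bounds. Here the hypothesis $\depth J=3$ and the fact that $(p_1,p_2,p_3)$ is a regular sequence guarantee that the relevant grade conditions hold, and the pfaffian identities guarantee the rank conditions. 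Alternatively, and perhaps more cleanly, I would identify the complex as the mapping cone arising from the liaison $I_Q=I_Z:I_G$ described in the introduction, comparing it with the known Buchsbaum--Eisenbud resolution of $R/J$ twisted against the Koszul complex on $(p_1,p_2,p_3)$.

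The main obstacle I anticipate is the bookkeeping of signs and of the pfaffian-adjoint identities: correctly deriving the blockwise relations from the global identity $\varphi\,\overline\varphi=(\pf\varphi)\idt$ and from Remark \ref{lpl}, and matching them against the two composite-vanishing computations without sign errors. Once those identities are pinned down, exactness follows formally from the acyclicity criterion, and the final step---that $Q$ is perfect of depth $3$ with exactly four minimal generators---is immediate from the shape of the resolution (length $3$, with the last free module $G\oplus R\cong R^4$), provided I verify that none of the four generators is redundant, which follows because $R$ is local and the entries of $\pf\psi\,|\,p$ lie in the maximal ideal together with the minimality inherited from the minimality of the Buchsbaum--Eisenbud resolution of $J$.
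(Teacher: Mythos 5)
Your verification that the displayed sequence is a complex is correct, and it isolates exactly the identities that matter: the blockwise Buchsbaum--Eisenbud relations coming from $\varphi(\pf\varphi)^{\vee}=0$ and $(\pf\varphi)\varphi=0$, namely $\lambda(\pf\psi)^{\vee}+\beta\sigma^{\vee}=0$ and $\pf\psi\,\lambda^{\vee}=\sigma\beta$, together with $\overline\beta\beta=\beta\overline\beta=p\,\idt_F$ (these are the same relations the paper's proof runs on; note only that $\pf\psi\,\lambda^{\vee}=\sigma\beta$ is the $F^{\vee}$-block of $(\pf\varphi)\varphi=0$, not a consequence of any identity ``$\pf\psi=\pf\alpha$''). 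The genuine gap is the exactness step. The Buchsbaum--Eisenbud acyclicity criterion requires, for the three maps $d_1=\pf\psi|p$, $d_2=\big(p\,|\lambda^{\vee}\overline\beta,-\pf\psi\,|-\sigma\big)$, $d_3=(\lambda^{\vee},-\beta)$, both rank additivity and the grade conditions $\depth I(d_k)\ge k$, where $I(d_k)$ is the ideal of minors of size $\rank d_k$. Your parenthetical claim that these ideals are ``equivalently $Q$, $I$ and $(p)$'' is false: $I(d_1)$ is indeed $(p_1,p_2,p_3,p)$, but $I(d_3)$ is the ideal of $(m-3)\times(m-3)$ minors of the $m\times(m-3)$ matrix $(\lambda^{\vee},-\beta)$, which is not $(p)$, and nothing in the hypotheses ($\depth J=3$ and $(p_1,p_2,p_3)$ regular) hands you $\depth I(d_3)\ge 3$ or $\depth I(d_2)\ge 2$. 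Those grade bounds are precisely what Buchsbaum--Eisenbud lets you deduce \emph{from} exactness; using them as an input is circular unless you give an independent argument, which the proposal does not contain. (Likewise, your injectivity argument at the left end assumes $p=\pf\beta$ is a nonzerodivisor, which is not given.)

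The route that works is the one you mention only in passing, and it is the paper's actual proof: liaison. One lifts the surjection $R/I\to R/J$ to an explicit chain map $\tau$ from the resolution $0\to R\xrightarrow{(\pf\psi)^{\vee}}G^{\vee}\xrightarrow{\psi}G\xrightarrow{\pf\psi}R$ of $R/I$ to the Buchsbaum--Eisenbud resolution of $R/J$, with vertical components $\idt$, $(\idt,0)$, $(p,-\overline\beta\lambda)$ and $p$; the two nontrivial commutativities are exactly your two identities plus $\alpha p+\lambda^{\vee}\overline\beta\lambda=\psi$ from Remark \ref{lpl}. The standard linkage theorem (see \cite{PS}) then says that the mapping cone of $\tau^{\vee}$ is a free resolution of $R/(I:J)=R/Q$, and cancelling the identity blocks yields the displayed complex; exactness comes for free, with no grade computations. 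So the substantive content you would still have to supply is the comparison map itself --- in particular the middle component $(p,-\overline\beta\lambda)$, which your proposal never writes down. Finally, your closing claim that the four generators are minimal is both unnecessary (the proposition asserts only a free resolution) and unavailable in general: the displayed resolution need not be minimal, as the paper's own example shows, where a summand $R(-15)$ cancels from it.
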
 
\begin{proof}
We start by proving that the canonical surjection $R/I\to R/J$ can be lifted to a map between their resolutions in the following way:
 
 $$\xymatrix@R=1.5cm@C=1.3cm{
0\ar[r]&R\ar[r]^{(\pf\psi)^{\vee}}\ar[d]^{p}&{\phantom{a}G^{\vee}}\ar[rr]^{\psi}\ar[d]^{(p,-\overline\beta\lambda)}&&
	G\ar[r]^{\pf\psi}\ar[d]^{(\idt,0)}&R\ar[d]^{\idt} \\
	0\ar[r]&R\ar[r]^(.4){((\pf\psi)^{\vee},\sigma^{\vee})}&G^{\vee}\oplus F^{\vee}\ar[rr]^-{\big(\alpha\,|-\lambda^{\vee},\lambda|\beta\big)}&&G\oplus F\ar[r]^-{\pf\psi | \sigma}&R
	 }$$
Indeed, since
	$$0=(\lambda|\beta)((\pf\psi)^{\vee},\sigma^{\vee})=\lambda(\pf\psi)^{\vee}+\beta\sigma^{\vee}\rw$$
	$$\rw\overline{\beta}\lambda(\pf\psi)^{\vee}+\overline{\beta}\beta\sigma^{\vee}=0\rw
	\overline{\beta}\lambda(\pf\psi)^{\vee}+p\sigma^{\vee}=0$$
we have
$$(p,-\overline\beta\lambda)(\pf\psi)^{\vee}=(p(\pf\psi)^{\vee},-\overline\beta\lambda(\pf\psi)^{\vee})
	=(p(\pf\psi)^{\vee},p\sigma^{\vee})=p((\pf\psi)^{\vee},\sigma^{\vee}).$$
	
	 Now Remark \ref{lpl} will imply, in our notation, that $\alpha p+\lambda^{\vee}\overline\beta\lambda=\psi.$  Therefore we have
\begin{multline*}
	\big(\alpha\,|-\lambda^{\vee},\lambda|\beta\big)(p,-\overline\beta\lambda)=\\
	=(\alpha p+\lambda^{\vee}\overline\beta\lambda,\lambda p-\beta\overline\beta\lambda)=
	(\psi,\lambda p-p\lambda)=(\psi,0)=(\idt,0)\psi.
\end{multline*}

Then we set ${\mathbb F}^I_{\bullet}$ and ${\mathbb F}^J_{\bullet}$ the above resolutions of $R/I$ and $R/J$ and $\tau:{\mathbb F}^I_{\bullet}\to{\mathbb F}^J_{\bullet}$ the above complex map. Thus, if we set $Q:=I:J,$ we see that a free resolution of $R/Q$ is given by the mapping cone of the map $\tau^{\vee}:({\mathbb F}^J_{\bullet})^{\vee}\to
({\mathbb F}^I_{\bullet})^{\vee}.$ If we make cancellations where we have identities map we get the required resolution of the almost complete intersection $R/Q.$
\end{proof}
As a simple consequence of the previous result we have 

\begin{cor}\label{gen}
Let $R$ be a Noetherian local ring and let $M$ be an alternating matrix of odd rank $m,$ whose entries are in $R.$ Let $p_1,\ldots,p_m$ be the submaximal pfaffians of $M$ and $p_{abc}$ the pfaffian of order $m-3$ obtained by $M$ by deleting the rows and columns $a,b,c$. If $(p_a,p_b,p_c)$ is a regular sequence, then
 $$(p_a,p_b,p_c):(p_1,\ldots,p_m)=(p_a,p_b,p_c,p_{abc}).$$ 
\end{cor}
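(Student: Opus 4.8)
The plan is to deduce this corollary directly from Proposition \ref{constr} by specializing the construction to the case where the Gorenstein ideal $J$ is itself given by the submaximal pfaffians of the matrix $M.$ First I would set $\varphi_0=\varphi$ to be the alternating map determined by $M,$ so that $J=\Pf_{m-1}(M)=(p_1,\ldots,p_m)$ is a depth $3$ Gorenstein ideal (using the Buchsbaum--Eisenbud theorem, as in the setup preceding Proposition \ref{constr}). Since $(p_a,p_b,p_c)$ is assumed to be a regular sequence inside $J,$ I take $(p_1,p_2,p_3)=(p_a,p_b,p_c)$ after a harmless relabelling of the basis vectors, so that $I=(p_a,p_b,p_c)$ and $Q=I:J$ is exactly the almost complete intersection produced by the proposition.

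The key computation is then to identify the single extra generator of $Q$ beyond $p_a,p_b,p_c.$ Following the notation of Proposition \ref{constr}, after the decomposition $H=G\oplus F$ with $G$ carrying the three chosen pfaffians and $F$ the complementary block, the map $\psi:G^{\vee}\to G$ is the $3\times 3$ alternating block whose submaximal pfaffians are $p_a,p_b,p_c,$ while $\beta:F^{\vee}\to F$ is the alternating block of size $m-3$ obtained by deleting rows and columns $a,b,c$ from $M.$ Hence $p=\pf\beta$ is precisely the pfaffian $p_{abc}$ of order $m-3$ defined in the statement. Reading off the last map of the resolution in Proposition \ref{constr}, namely $\pf\psi\,|\,p:G\oplus R\to R,$ one sees that the generators of $Q$ are the three components of $\pf\psi$ together with $p,$ that is, $p_a,p_b,p_c,p_{abc}.$ Therefore $Q=(p_a,p_b,p_c,p_{abc}).$

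The step I expect to require the most care is the bookkeeping of signs and the correct matching between the abstract blocks $\alpha,\beta,\lambda,\sigma$ of Proposition \ref{constr} and the literal submatrices of $M$ obtained by deleting rows and columns. In particular one must verify that $\pf\psi$ really recovers $(p_a,p_b,p_c)$ up to the sign conventions fixed by formula $(1)$ and by the bracket $\langle i,j\rangle,$ and that $\pf\beta=p_{abc}$ with the orientation chosen in the definition of the pfaffian adjoint. Once the blocks are correctly aligned, the identity $Q=I:J=(p_a,p_b,p_c,p_{abc})$ is immediate from the explicit resolution, and the proof concludes.
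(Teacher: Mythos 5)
Your proposal is correct and matches the paper's intended derivation: the paper gives no separate proof, presenting the corollary as an immediate consequence of Proposition \ref{constr}, and your instantiation $\varphi=M$ (rightly bypassing Lemma \ref{3gen}, since the regular sequence already occurs among the submaximal pfaffians of $M$, up to a harmless relabelling), with $\beta$ the block of $M$ obtained by deleting rows and columns $a,b,c$ so that $p=\pf\beta=p_{abc},$ is exactly that consequence. Reading $I_Q=\im(\pf\psi\,|\,p)=(p_a,p_b,p_c,p_{abc})$ off the last map of the resolution of $R/Q,$ $Q=I:J,$ completes the argument just as the paper intends.
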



 The Proposition \ref{constr} gives a way to construct almost complete intersection algebras but the very interesting  thing is that every almost complete intersection can be constructed in this way.
 
 \begin{thm}\label{tqci}
Let $R$ be a Noetherian local ring and $I_Q\subset R$ be a perfect ideal of depth $3$ of an almost complete intersection. Then there exists an alternating map $\psi:H^{\vee}\to H,$ where $H$ is a free $R$-module of odd rank with $\im\pf\psi$ of depth $3$, such that, with the above notation, 
 $$\xymatrix@R=1.5cm@C=1.3cm{
0\ar[r]&F^{\vee}\ar[r]^(.4){(\lambda^{\vee},-\beta)}&G\oplus F\ar[rr]^{\big(p\, | \lambda^{\vee}\overline\beta,-\pf\psi\, |- \sigma\big)}&&G\oplus R\ar[r]^-{\pf\psi | p}&R}$$
is a free resolution of $R/I_Q.$
\end{thm}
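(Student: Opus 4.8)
The plan is to reverse the construction of Proposition \ref{constr}: given an arbitrary almost complete intersection $I_Q$ of depth $3$, I must produce the alternating map $\psi$ (equivalently the data $\alpha,\beta,\lambda,\sigma$) whose mapping-cone construction recovers the displayed resolution. The natural starting point is liaison. Since $I_Q$ is a depth-$3$ almost complete intersection with $\nu(I_Q)=4$, I pick three of its four minimal generators forming a regular sequence $(p_1,p_2,p_3)$, set $I=(p_1,p_2,p_3)$, and form the directly linked ideal $I_G:=I:I_Q$. As noted in the introduction, $I_G$ is a Gorenstein ideal of depth $3$, and by liaison $I_Q=I:I_G$. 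Thus $I_Q$ arises exactly as the colon ideal appearing in the proof of Proposition \ref{constr}, provided I can exhibit $I$ and $I_G$ inside the pfaffian framework.

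**Next I would** invoke the structural lemmas to set up the alternating maps. By the Buchsbaum--Eisenbud theorem $I_G=\Pf_{m_0-1}(\varphi_0)$ for some alternating $\varphi_0$ of odd rank with $\depth I_G=3$; after possibly enlarging (using Lemma \ref{3gen}) I obtain an alternating map $\varphi:H^\vee\to H$ of odd rank $m$ with $\Pf_{m-1}(\varphi)=I_G$ and with three of its submaximal pfaffians equal to the chosen regular sequence $p_1,p_2,p_3$. This is precisely the hypothesis set up before Proposition \ref{constr}. I then let $\psi:G^\vee\to G$ be the $3\times 3$ block whose pfaffians are $p_1,p_2,p_3$, decompose $H=G\oplus F$, and read off $\varphi=(\alpha\,|-\lambda^\vee,\lambda|\beta)$ together with $\sigma:F\to R$ from $\pf\varphi=\pf\psi\,|\,\sigma$, exactly as in the paragraph preceding the proposition. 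With this data in hand, Proposition \ref{constr} itself asserts that the mapping cone of $\tau^\vee$ yields a free resolution of $R/(I:I_G)=R/I_Q$ of the stated shape.

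**The main point to verify** is that the resolution produced this way is genuinely a resolution of \emph{our} $I_Q$ and has the displayed form, i.e.\ that the colon ideal $I:\Pf_{m-1}(\varphi)$ computed by the construction coincides with $I:I_G=I_Q$. This is immediate once $\Pf_{m-1}(\varphi)=I_G$ is arranged. A secondary check is that the map $\pf\psi\,|\,p:G\oplus R\to R$ in the construction has image equal to $I_Q=(p_1,p_2,p_3,p)$, where $p=\pf\beta$ is (up to sign) the submaximal pfaffian $p_{123}$ of $\varphi$; this identifies the fourth generator of $I_Q$ with the extra pfaffian, which is exactly the content of Corollary \ref{gen}, asserting $I:I_G=(p_1,p_2,p_3,p_{123})$. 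Hence the constructed complex resolves precisely $R/I_Q$.

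**The hard part will be** the bookkeeping ensuring that an \emph{arbitrary} almost complete intersection fits the template rather than only those for which the three chosen generators are automatically compatible with a pfaffian presentation. The delicate issue is that the regular sequence $(p_1,p_2,p_3)$ extracted from $I_Q$ must be realized as three of the submaximal pfaffians of a single alternating matrix presenting $I_G$; this is exactly what Lemma \ref{3gen} guarantees, so the argument reduces to assembling the already-established pieces—liaison to get $I_G$ Gorenstein of depth $3$, Lemma \ref{3gen} to embed the generators as pfaffians, and Proposition \ref{constr} to produce the resolution—and then observing that the hypothesis $\im\pf\psi$ of depth $3$ holds because $\depth I_G=3$. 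No new computation is required beyond confirming these hypotheses, so the theorem follows by running the construction of Proposition \ref{constr} backwards on the linked pair $(I,I_G)$.
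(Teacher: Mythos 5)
Your proposal is correct and follows essentially the same route as the paper's own proof: choose a regular sequence among the minimal generators, link to the Gorenstein ideal $I_G=I_Z:I_Q$, realize that regular sequence among the submaximal pfaffians of a single alternating map via Lemma \ref{3gen}, and apply Proposition \ref{constr} together with the liaison identity $I_Q=I_Z:I_G$. The additional checks you flag (the identification of the fourth generator via Corollary \ref{gen} and the depth of $\im\pf\psi$) are implicit in the paper's brief argument and do not change the approach.
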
 
\begin{proof}
Since $I_Q$ is an almost complete intersection ideal we have that $I_Q=(p_0,p_1,p_2,p_3)$ and since $\depth I_Q=3$ we can suppose that $I_Z:=(p_1,p_2,p_3)$ is generated by a regular sequence. Let $I_G:=I_Z:I_Q.$ Then $I_G$ is a Gorenstein ideal of depth $3.$ By Lemma \ref{3gen}, there exists an alternating map $\varphi:H^{\vee}\to H,$ such that the submaximal pfaffians of $\varphi$ are exactly $p_1,p_2,p_3$ and the other generators of $I_G.$ By Proposition~\ref{constr} the above resolution gives a resolution of $R/I_Q.$
\end{proof}

Now we would like to give a graded version of the previous result.

Let $\pp^r_k$ be the projective space with $r \ge 3$ and $R=k[x_0,x_1,\ldots,x_r]$ the standard graded coordinate ring of $\pp^r_k.$ 

\begin{thm}\label{grqci}
Let $Q \subset \pp^r_k$ be an almost complete intersection scheme of codimension $3$ and $I_Q\subset R$ be its defining ideal. Then $R/I_Q$ admits a graded free resolution of the following type 
 $$0\to K^{\vee}(-d) \to G(-d_0) \oplus K \to G \oplus R(-d_0)\to R$$
 where $d_0$ is a positive integer, $G=\oplus_{i=1}^3R(-d_i),$ $d=d_o+d_1+d_2+d_3,$ $K=\oplus_{i=4}^mR(-e_i),$ $m\ge 5$ an odd integer and  $d_1, d_2, d_3,e_4-d_0, \ldots, e_m-d_0$ are the degrees of all submaximal pfaffians of a suitable alternating matrix of size $m.$
\end{thm}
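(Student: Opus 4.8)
The plan is to run the argument of Theorem \ref{tqci} inside the graded category, carrying every twist along, and then to read off the shifts from the degrees of the submaximal pfaffians. First I would fix homogeneous minimal generators of $I_Q$ and single out three of them, say $p_1,p_2,p_3$ of degrees $d_1,d_2,d_3$, forming a regular sequence, the remaining generator $p_0$ having degree $d_0$. Since $R/I_Q$ has depth $3$ such a homogeneous regular sequence exists, and because $\nu(I_Q)=4$ the quotient $I_Q/(p_1,p_2,p_3)$ is cyclic, generated by the image of $p_0$. Setting $I_Z=(p_1,p_2,p_3)$ and $I_G=I_Z:I_Q$, graded linkage produces a homogeneous Gorenstein ideal of depth $3$, to which the graded Buchsbaum--Eisenbud theorem applies.

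Next I would invoke the graded refinement of Lemma \ref{3gen}: there is a homogeneous alternating map $\varphi:H^{\vee}\to H$ of odd size $m\ge 5$ whose submaximal pfaffians generate $I_G$ and three of which are exactly $p_1,p_2,p_3$. Writing $f_1,\dots,f_m$ for the degrees of the submaximal pfaffians, we have $f_i=d_i$ for $i=1,2,3$; I set the Gorenstein twist $\vartheta=2\|\{\{f_1,\dots,f_m\}\}\|/(m-1)$, so that every entry $a_{ij}$ of the matrix of $\varphi$ is homogeneous of degree $\vartheta-f_i-f_j$ and every submaximal pfaffian is homogeneous of degree $f_i$. I would then verify that the chain map $\tau$ of Proposition \ref{constr}, built from the Koszul resolution of $R/I_Z$ and the pfaffian resolution of $R/I_G$, is homogeneous of degree zero; granting this, the mapping cone of $\tau^{\vee}$ (twisted so as to resolve $R/(I_Z:I_G)=R/I_Q$) is a homogeneous free resolution of exactly the shape displayed in Proposition \ref{constr}.

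The heart of the matter is the degree bookkeeping. By Corollary \ref{gen} the fourth generator is $p_0=p_{abc}$, the pfaffian of order $m-3$ got by deleting the rows and columns carrying $p_1,p_2,p_3$; counting the degree of one matching term and using $\sum_i f_i=(m-1)\vartheta/2$ gives
$$\deg p_0=\tfrac{m-3}{2}\vartheta-\!\!\sum_{i\ne a,b,c}\!\! f_i=f_a+f_b+f_c-\vartheta=d_1+d_2+d_3-\vartheta,$$
so that $d_0=d_1+d_2+d_3-\vartheta$, i.e. $d_1+d_2+d_3=\vartheta+d_0$. With this identity the twists of the mapping cone collapse to the stated ones. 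Indeed, the dual of the syzygy module $\bigoplus_i R(-(\vartheta-f_i))$ of the Gorenstein resolution, twisted by $-(d_1+d_2+d_3)$, becomes $\bigoplus_{i=1}^m R(-(f_i+d_0))$, which I split as $G(-d_0)\oplus K$ with $e_i:=f_i+d_0$ and $K=\bigoplus_{i=4}^m R(-e_i)$; the dual of the generator module $\bigoplus_i R(-f_i)$, twisted by the same amount, becomes $\bigoplus_i R(-(d_1+d_2+d_3-f_i))$, whose $i\ge 4$ part is $K^{\vee}(-d)$ with $d=d_0+d_1+d_2+d_3$, the three summands with $i\le 3$ cancelling against the Koszul terms of $R/I_Z$ via the identity block of $\tau$. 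Finally $e_i-d_0=f_i$ together with $d_i=f_i$ for $i\le 3$ exhibits $d_1,d_2,d_3,e_4-d_0,\dots,e_m-d_0$ as precisely the degrees $f_1,\dots,f_m$ of the submaximal pfaffians.

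I expect the main obstacle to be the two homogeneity claims rather than the numerics. The first is the genuine existence of three homogeneous minimal generators of $I_Q$ forming a regular sequence, so that the shifts $d_1,d_2,d_3$ really are generator degrees; when the four generators lie in distinct degrees there is no room for graded change of basis, and this step seems to rely on $k$ being infinite together with a general-position argument in each degree. The second is that the auxiliary matrix produced by the graded version of Lemma \ref{3gen} --- whose padding carries entries $\pm 1$ and $0$ --- can be assigned consistent degrees making $\varphi$ a true homogeneous alternating map, the $0$ submaximal pfaffians corresponding to (possibly non-minimal) split summands. Once these are secured, the remainder is the functoriality of the mapping cone in the graded setting plus the pfaffian degree count above.
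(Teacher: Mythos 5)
Your proposal is correct and follows essentially the same route as the paper's proof: pick three minimal generators of $I_Q$ forming a regular sequence, link to the Gorenstein ideal $I_Z:I_Q$, apply the graded version of Lemma \ref{3gen} and the comparison-map/mapping-cone construction of Proposition \ref{constr}, dualize and shift, then cancel and read off the twists. The only difference is presentational: you recover $d_0=d_1+d_2+d_3-\vartheta_G$ by the pfaffian degree count through Corollary \ref{gen}, whereas the paper simply sets $d_0:=\vartheta_Z-\vartheta_G$, which is the same identity since $\vartheta_Z=d_1+d_2+d_3$.
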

\begin{proof} 
Let $Z\subset\pp^r_k$ be a complete intersection generated by $3$ minimal generators of $I_Q,$ of degrees $d_1,d_2,d_3.$ Let $I_\Gamma:=I_Z:I_Q.$ $I_\Gamma$ is the saturated homogeneous ideal of an aG scheme $\Gamma$ directly linked to $Q$ in $Z.$ As in Proposition~\ref{constr} we have the following diagram
 $$\xymatrix@R=1.3cm@C=1.1cm{
0\ar[r]&R(-\vartheta_Z)\ar[r]\ar[d]&{\phantom{a}G^{\vee}}(-\vartheta_Z)\ar[r]\ar[d]&
	G\ar[r]\ar[d]&R\ar[d] \\
	0\ar[r]&R(-\vartheta_G)\ar[r]&G^{\vee}(-\vartheta_G)\oplus F^{\vee}(-\vartheta_G)\ar[r]^{\phantom{aaaaaaa}\varphi}&G\oplus F\ar[r]&R
	 }$$
where the first row is a minimal graded resolution of $R/I_Z$ and the second row is the graded resolution of $R/I_{\Gamma},$ obtained by the alternating map $\varphi$ as in Lemma~\ref{3gen}. So we have that $G=\oplus_{i=1}^3R(-d_i)$ and $F=\oplus_{i=4}^mR(-d_i).$ Dualizing and shifting by $-\vartheta_Z$ this diagram we get
  $$\xymatrix@R=1.1cm@C=0.5cm{
0\ar[r]&R(-\vartheta_Z)\ar[r]\ar[d]&G^{\vee}(-\vartheta_Z)\oplus F^{\vee}(-\vartheta_Z)\ar[d]\ar[r]&G(-d_0)\oplus F(-d_0)\ar[r]\ar[d]&R(-d_0)\ar[d] \\
	0\ar[r]&R(-\vartheta_Z)\ar[r]&{\phantom{a}G^{\vee}}(-\vartheta_Z)\ar[r]&
	G\ar[r]&R 
	 }$$
where $d_0:=\vartheta_Z-\vartheta_G.$ Taking the mapping cone and after the trivial cancellations we obtain the following resolution of $R/I_Q$
  $$0\to F^{\vee}(-\vartheta_Z)\to G(-d_0)\oplus F(-d_0)\to G\oplus R(-d_0)\to R.$$
Now if we set $K:=F(-d_0)$ and $d:=2\vartheta_Z-\vartheta_G=d_0+d_1+d_2+d_3$ we get
  $$0\to K^{\vee}(-d) \to G(-d_0) \oplus K \to G \oplus R(-d_0)\to R,$$
which is the required resolution.
\end{proof}

\begin{cor}\label{d0}
Let $Q \subset \pp^r_k$ be an almost complete intersection scheme of codimension $3$ and $I_Q\subset R$ be its defining ideal, then $R/I_Q$ admits a graded free resolution of the type 
$$0\to K^{\vee}(-d) \to G(-d_0) \oplus K \to G \oplus R(-d_0)\to R$$
in which $d_0=\min\{\deg p \ | \ p\in I_Q\}.$
\end{cor}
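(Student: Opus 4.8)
The plan is to derive the Corollary from Theorem~\ref{grqci} by a judicious choice of the linking complete intersection $Z$. In that theorem $Z$ is cut out by three minimal generators of $I_Q$ forming a regular sequence, their degrees being $d_1,d_2,d_3$; reading off the first map $G\oplus R(-d_0)\to R$ of the resolution one sees that the four minimal generators of $I_Q$ have degrees $d_1,d_2,d_3$ and $d_0$, so that $d_0$ is precisely the degree of the fourth minimal generator, the one not belonging to $Z$. Writing $d_{\min}:=\min\{\deg p\mid p\in I_Q\}$, which is the least degree of a minimal generator of $I_Q$, it therefore suffices to produce three minimal generators of $I_Q$ that form a regular sequence and whose complementary minimal generator has degree $d_{\min}$.

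To this end I would fix a minimal generator $g_0$ of degree $d_{\min}$ and complete it to a minimal homogeneous system of generators $g_0,g_1,g_2,g_3$ with $\deg g_i=a_i$ and $d_{\min}=a_0\le a_1\le a_2\le a_3$. The aim is to replace $g_1,g_2,g_3$ by homogeneous elements $f_1,f_2,f_3$ of the same degrees, of the shape
$$f_i=g_i+\ell_i g_0+\sum_{j<i}m_{ij}f_j,\qquad \deg\ell_i=a_i-a_0,\ \deg m_{ij}=a_i-a_j,$$
which form a regular sequence; since every correction term lies in $\gothic m\,I_Q$ when $a_i>a_0$ and is a general same-degree combination when $a_i=a_0$, the set $g_0,f_1,f_2,f_3$ is again a minimal generating system, and $g_0$ is deliberately left out of the regular sequence.

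The $f_i$ would be chosen inductively so that $\hgt(f_1,\dots,f_i)=i$. Since $R$ is a domain any nonzero $f_1$ works. At step $i$ I must select $f_i$ outside every minimal prime $P$ of $(f_1,\dots,f_{i-1})$; each such $P$ has height $i-1\le 2$ and so $P\ne\gothic m$, where $\gothic m=(x_0,\dots,x_r)$. The admissible family $g_i+\{\ell g_0:\deg\ell=a_i-a_0\}+\sum_{j<i}\{m f_j:\deg m=a_i-a_j\}$ lies entirely inside $P$ only if $g_i\in P$ and $\ell g_0\in P$ for all $\ell$ of degree $a_i-a_0$; as a homogeneous prime $P\ne\gothic m$ cannot contain every form of a fixed positive degree, this forces $g_0\in P$, and then the defining relations of the $f_j$ give $g_0,g_1,\dots,g_i\in P$. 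That would yield $\hgt(g_0,\dots,g_i)\le i-1$ and hence $\hgt I_Q\le(i-1)+(3-i)=2$, contradicting $\hgt I_Q=3$. Consequently the family is not contained in any relevant $P$, and homogeneous prime avoidance over the infinite field $k$ provides a general member $f_i$ lying outside all of them.

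Feeding the complete intersection $Z$ defined by the regular sequence $f_1,f_2,f_3$ into Theorem~\ref{grqci} then yields a resolution of the stated shape with $d_0=\deg g_0=d_{\min}$. The delicate point is precisely the regular-sequence step: the degree constraint permits perturbing $f_i$ only by multiples of generators of degree $\le a_i$, and the sole generator of degree strictly below $a_i$ available at the outset is $g_0$, so one must verify that this limited freedom still dodges every low-height prime. What makes it go through is the hypothesis $\hgt I_Q=3$, which prevents any prime of height $\le 2$ from swallowing enough of the generators for the base locus of the admissible family to fill a minimal prime.
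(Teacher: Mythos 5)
Your proof is correct and takes essentially the same route as the paper: the paper's proof of Corollary~\ref{d0} consists precisely of choosing a regular sequence of minimal generators of the three largest degrees $d_1\le d_2\le d_3$ and rerunning the argument of Theorem~\ref{grqci}, so that the leftover generator degree $d_0$ is the minimal one. The only difference is that the paper asserts in a single sentence that such a regular sequence of minimal generators exists, whereas you supply the prime-avoidance construction justifying that step.
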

\begin{proof}
Let $d_0\le d_1\le d_2 \le d_3$ the degrees of a minimal set of generatorso of $I_Q.$ Since we can find a regular sequence of minimal generators in $I_Q$ of type $d_1, d_2,d_3,$ the conclusion follows by repeating the argument of Theorem \ref{grqci} using such a regular requence. 
\end{proof}

\begin{exm}
Let $Q\subset\pp^3_k$ be the $0$-dimensional almost complete intersection linked to $5$ general points ($\Gamma$) in a complete intersection $Z$ of type $(2,2,8).$ Then we consider the following graded resolutions
 $$0\to R(-12)\to R(-4)\oplus R(-10)^2\to R(-2)^2\oplus R(-8)\to R\to R/I_Z\to 0$$
and
\begin{multline*}
 0\to R(-5)\to R(-8)\oplus R(-3)^5\oplus R(3)\to R(-8)\oplus R(-2)^5\oplus R(3)\to \\ \to R\to R/I_{\Gamma}\to 0,
\end{multline*}
where the last one is the pfaffian resolution of $R/I_{\Gamma}$ got by the minimal one by adding the term 
$R(-8)\oplus R(3)$ to the second and to the third module of the complex. Such a resolution can be built as illustrated in Lemma~\ref{3gen} and the pfaffians of its alternating central map are the five minimal generators of $I_{\Gamma}$ of degree $2,$ the form of $I_{\Gamma}$ of degree $8$ used to perform the linkage and the null form.
So we have $G=R(-2)^2\oplus R(-8),$ $K=R(-9)^3\oplus R(-4),$ $d_0=7$ and $d=19.$
Consequently we get the following resolution of $R/I_Q$
\begin{multline*}
 0\to R(-15)\oplus R(-10)^3\to [R(-9)^2\oplus R(-15)]\oplus[R(-9)^3\oplus R(-4)]\to \\ 
 \to[R(-8)\oplus R(-2)^2]\oplus R(-7)\to R\to R/I_{Q}\to 0.
\end{multline*}
Note that a minimal graded resolution of $R/I_Q$ can be obtained by deleting the term $R(-15).$
\end{exm}

\section{The graded Betti numbers of almost complete intersections} 
\markboth{\it The graded Betti numbers of almost complete intersections}
{\it The graded Betti numbers of almost complete intersections}
In this section we would like to characterize all graded Betti sequences admissible for an almost complete intersection scheme of codimension $3$ of $\pp^r.$
\par
Since we are interested to the graded Betti numbers for such schemes we can restrict ourselves to the Artinian reduction of $X.$ So, from now on, we let $R=k[x_1,x_2,x_3]$ and $A_Q= R/I_Q$ an Artinian almost complete intersection graded algebra. A graded minimal free resolution of $A_Q$ is of the following form 
$$0\to   \bigoplus_{h \in F}R(-h) \to \bigoplus_{h \in E}R(-h) \to \bigoplus_{h \in D}R(-h) \to R \to A_Q \to 0$$
where $D,E,F$ are multisets of positive integers with $|D|=4,$ $|E|=|F|+3.$

For this aim we will consider the following multisets of positive integers
$D=\{\{d_0\le d_1\le d_2\le d_3\}\},$ $E=\{\{e_i\}\}_{i=1}^{p+3}$ and $F=\{\{f_i\}\}_{i=1}^{p}$ where $p\ge 2.$ In the sequel we will denote $D^*=\{\{ d_1\le d_2\le d_3\}\}.$

\begin{lem}\label{l1}
Let $(D,E,F)$ be a Betti sequence admissible for an Artinian almost complete intersection graded algebra of codimension $3.$ Then 
\begin{itemize}
\item [1)] $d-F \subset E,$ where $d=\|D\|;$
\item [2)] if we set $\widehat{E}:=E\setminus (d-F),$ $S:=D^*\cap (\vartheta_Z-\widehat{E}),$ $\overline{D}:=D^* \setminus S$ then $\widehat{E}=(d_0+\overline{D})\sqcup (\vartheta_Z-S).$ 
\end{itemize}
\end{lem}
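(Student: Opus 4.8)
The plan is to derive both assertions by exploiting the explicit shape of the free resolution of $A_Q=R/I_Q$ produced in Theorem~\ref{grqci} (equivalently Corollary~\ref{d0}), together with the bookkeeping furnished by the mapping-cone construction. Recall that the resolution there has the form
$$0\to K^{\vee}(-d)\to G(-d_0)\oplus K\to G\oplus R(-d_0)\to R,$$
with $G=\bigoplus_{i=1}^3 R(-d_i)$, $K=\bigoplus R(-e_i)$, and $d=d_0+d_1+d_2+d_3=\|D\|$; and that this resolution is obtained as the mapping cone of the dualized comparison map between the Koszul resolution of $R/I_Z$ and the pfaffian resolution of $R/I_\Gamma$, where $Z$ is the complete intersection cut out by the three top-degree generators of $I_Q$ and $\Gamma$ is its linked aG scheme. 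The key point is that this (generally non-minimal) resolution computes the same Betti data as the minimal one up to the cancellable repetitions identified in the earlier Proposition on Gorenstein resolutions, so I can read off the multisets $D,E,F$ by translating the pfaffian data through the two shifts $d_0$ and $d$.

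\emph{First I would} establish part (1). In the mapping-cone resolution the last free module is $K^{\vee}(-d)$; since $K^{\vee}(-d)=\bigoplus R(-(d-e_i))$, the multiset $F$ of degrees in the final module equals $d-(\text{degrees of }K)$. Dualizing the pfaffian resolution of $R/I_\Gamma$ interchanges the middle two modules with the self-duality shift $\vartheta_G$, and each degree appearing in the last spot of the $A_Q$-resolution reappears, shifted, among the middle degrees $E$: concretely the copy of $K^{\vee}(-d)$ that maps into $G(-d_0)\oplus K$ forces every $f\in F$ to satisfy $d-f\in(\text{degrees of }K)\subseteq E$. Thus $d-F\subseteq E$ as multisets, with the needed multiplicities coming from the fact that the comparison map is injective in the relevant degrees. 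This is essentially a direct inspection of the maps in the displayed complex, so it should be routine once the shifts are tracked carefully.

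\emph{Next, for} part (2), the idea is to split $E$ into the part $d-F$ (coming from $K^{\vee}(-d)$ through the mapping cone) and the complementary part $\widehat E=E\setminus(d-F)$, which must then be exactly the degrees contributed by the middle module $G(-d_0)\oplus K$ that are \emph{not} already accounted for. The summand $G(-d_0)=\bigoplus_{i=1}^3 R(-(d_0+d_i))$ contributes the shifted degrees $d_0+D^*$, where $D^*=\{\{d_1,d_2,d_3\}\}$. The remaining contribution comes from $K$, whose degrees are the pfaffian degrees $e_i$; by the self-duality of the Gorenstein resolution of $R/I_\Gamma$ these pair up under $h\mapsto\vartheta_Z-h$ with degrees in $\widehat E$ itself, which is precisely what produces the term $\vartheta_Z-S$ once we intersect $D^*$ with $\vartheta_Z-\widehat E$ to isolate the symmetric piece $S$. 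After cancelling the common (cancellable) repetitions, what survives of $d_0+D^*$ is $d_0+\overline D$ with $\overline D=D^*\setminus S$, giving $\widehat E=(d_0+\overline D)\sqcup(\vartheta_Z-S)$.

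\emph{The hard part will be} making the multiset bookkeeping rigorous rather than merely plausible: one must verify that the only cancellations occurring when passing from the mapping-cone resolution to the minimal one are exactly those between $d_0+D^*$ and $\vartheta_Z-(\text{pfaffian degrees})$, i.e.\ exactly the set $S=D^*\cap(\vartheta_Z-\widehat E)$, and that no spurious further cancellations alter the multiplicities. Here I expect to invoke the minimality/self-duality facts recorded in the Proposition on Gorenstein resolutions (parts i and ii) and Proposition~\ref{dual}, which guarantee the symmetry $\mu(s)=\mu(\vartheta_Z-s)$ needed to match multiplicities on the two sides. The disjoint-union structure in the final formula reflects that $d_0+\overline D$ and $\vartheta_Z-S$ land in genuinely distinct parts of the resolution (one from $G(-d_0)$, one from $K$), so their multiplicities add rather than overlap; confirming this disjointness, and that $S$ is well-defined as a submultiset of $D^*$, is the technical crux, after which the identity $\widehat E=(d_0+\overline D)\sqcup(\vartheta_Z-S)$ follows by comparing supports and multiplicities term by term.
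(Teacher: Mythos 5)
Your proposal runs the linkage in the opposite direction from the one that actually proves the statement, and this is where it breaks down. You start from the structure-theorem resolution $0\to K^{\vee}(-d)\to G(-d_0)\oplus K\to G\oplus R(-d_0)\to R$ of Theorem~\ref{grqci}/Corollary~\ref{d0} and read off ``$F$ equals $d-(\mbox{degrees of }K)$'' and ``$(\mbox{degrees of }K)\subseteq E$.'' But $(D,E,F)$ is by definition the \emph{minimal} Betti data, while that resolution is in general non-minimal: $K$ comes from the pfaffian resolution of Lemma~\ref{3gen}, which carries superfluous rows (the added ``zero pfaffian'' rows), and on minimalizing some summands of $K^{\vee}(-d)$ cancel against $G(-d_0)\oplus K$. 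The paper's own example after Corollary~\ref{d0} exhibits exactly this: there the last module has degrees $\{\{10,10,10,15\}\}$ while $F=\{\{10,10,10\}\}$, the summand $R(-15)$ of $K^{\vee}(-d)$ cancelling against one of $G(-d_0)$. So the two identifications on which your proof of part (1) rests are precisely the assertions that need proof; obtaining them ``by inspection of the displayed complex'' is circular. The same defect is fatal for part (2): everything reduces to your claim that the cancellations are ``exactly $S$,'' which you explicitly defer to ``the hard part'' and propose to settle with Proposition~\ref{dual} and the unnumbered Proposition on Gorenstein resolutions. Those tools encode the self-duality $\mu(s)=\mu(\vartheta-s)$ of \emph{Gorenstein} minimal resolutions; the almost complete intersection resolution has no such duality, so they do not apply to it directly. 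Moreover the claim itself is imprecise: the cancellations in your complex also involve the zero rows of Lemma~\ref{3gen} (whose degrees have nothing to do with $S$), and on the Gorenstein side only a submultiset $\overline{S}\subseteq S$ ever cancels (Remark~\ref{minres}).

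The paper's proof goes the other way precisely to gain access to the missing duality. Starting from the \emph{minimal} resolution of $R/I_Q$ (whose data are $D,E,F$ by hypothesis), the liaison mapping cone yields a resolution of the linked Gorenstein algebra $R/I_G$ with generator degrees $(\vartheta_Z-F)\sqcup D^{*}$ and syzygy degrees $\vartheta_Z-E$. Two facts then do all the work: the liaison fact that no summand of $\bigoplus_{h\in\vartheta_Z-F}R(-h)$ can cancel in this resolution, and the self-duality of the minimal Gorenstein resolution. Together they force $\vartheta_G-(\vartheta_Z-F)=-d_0+F\subset\vartheta_Z-E$, which is part (1), and, after splitting off $S$ (the only possibly cancellable part), the pairing $\vartheta_G-\overline{D}=(\vartheta_Z-\widehat{E})\setminus S$, which is part (2). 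Any repair of your argument would have to transport the Gorenstein self-duality across the mapping cone to control the cancellations in the structure-theorem resolution, i.e.\ essentially redo the paper's argument in the reverse direction first; so the gap is not a matter of bookkeeping but of a missing, essential step.
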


\begin{proof} 1) Let $A=R/I_Q $ be an Artinian almost complete intersection graded algebra such that $\beta_{A}=(D,E,F).$ $I_Q$ contains a length $3$ regular sequence of type $(d_1,d_2,d_3)$ which is part of a minimal set of generators for $I_Q;$ let $I_Z$ be the ideal generated by such a regular sequence. Then $I_G:=I_Z:I_Q$  is a Gorenstein ideal of depth $3.$ By standard mapping cone procedure we obtain the following  graded free resolution of $R/I_G$
$$0\to R(-\vartheta_G)\to\bigoplus_{h\in \vartheta_Z-E}R(-h)\to \bigoplus_{h\in \vartheta_Z-F }R(-h)\oplus \bigoplus_{h\in D^*}R(-h)\to R$$
where $D^*=\{\{d_1,d_2,d_3\}\},$ $\vartheta_Z=\|D^*\|,$ $\vartheta_G=\vartheta_Z-d_0.$ Since, by mapping cone, no summand of $\bigoplus_{h\in \vartheta_Z-F }R(-h)$ is cancellable in such a resolution, by Gorenstein duality, $\vartheta_G-(\vartheta_Z-F) \subset \vartheta_Z-E$  hence $\vartheta_Z-(\vartheta_G-(\vartheta_Z-F)) \subset \vartheta_Z-(\vartheta_Z-E),$ i.e. $d-F \subset E.$

2) By our setting we have $D^*=S \sqcup \overline{D}$ and $\vartheta_Z-\widehat{E}=S \sqcup [(\vartheta_Z-\widehat{E})\setminus S];$ therefore in the previous resolution we have that 
$$\bigoplus_{h\in \vartheta_Z-E}R(-h)= \bigoplus_{h\in S}R(-h)\oplus\bigoplus_{h\in (\vartheta_Z-\widehat{E})\setminus S}R(-h)\oplus \bigoplus_{h\in -d_0+F }R(-h)$$
$$\bigoplus_{h\in D^*}R(-h)=\bigoplus_{h\in S}R(-h)\oplus \bigoplus_{h\in \overline{D}}R(-h)$$
where the only summands eventually cancellable are in $\bigoplus_{h\in S}R(-h),$ therefore, by Gorenstein duality, $\vartheta_G-\overline{D}=(\vartheta_Z-\widehat{E})\setminus S,$ hence $\vartheta_Z-(\vartheta_G-\overline{D})= \vartheta_Z-[(\vartheta_Z-\widehat{E})\setminus S],$ i.e. $d_0+\overline{D}=\widehat{E}\setminus (\vartheta_Z -S)$ and we are done. 
\end{proof}

Because of the previous lemma it is convenient to give the following definition.

\begin{dfn}\label{aci}
A sequence $(D,E,F)$ of multisets of positive integers is said of \aci-type if it satisfies the following conditions
\begin{itemize}
	\item[1)]$|D|=4,$ $|E|=|F|+3,$ $|F|\ge 2;$
	\item[2)]$d-F\subset E,$ where $d:=\|D\|;$
	\item[3)]if we set $\widehat{E}:=E\setminus (d-F),$ $d_0:=\min D,$ $D^*:=D\setminus\{d_0\},$ $\vartheta_Z=\|D^*\|,$ $S:=D^*\cap (\vartheta_Z-\widehat{E}),$ $\overline{D}:=D^* \setminus S$ then $\widehat{E}=(d_0+\overline{D})\sqcup (\vartheta_Z-S).$ 
\end{itemize}
\end{dfn}

\begin{rem}\label{minres}
Let $A=R/I_Q $ be an Artinian almost complete intersection graded algebra such that $\beta_{A}=(D,E,F);$  let $I_Z$ be an ideal generated by a regular sequence of type $(d_1,d_2,d_3)$ which is part of a minimal set of generators for $I_Q$ and  $I_G:=I_Z:I_Q$  the linked Gorenstein ideal of depth $3.$ Then the minimal graded free resolution of $A_G=R/I_G$ has the following form
$$
\begin{array}{c}
\displaystyle{\bigoplus_{h\in \vartheta_Z-F }R(-h)\oplus \bigoplus_{h\in \overline{D}}R(-h)	\oplus \bigoplus_{h\in \overline{S}}R(-h)}\\
\uparrow\\
\displaystyle{\bigoplus_{h\in -d_0+F }R(-h)\oplus \bigoplus_{h\in\vartheta_G- \overline{D}}R(-h)	\oplus \bigoplus_{h\in \overline{S}}R(-h)}\\
\uparrow\\
R(-\vartheta_G)\\
\uparrow\\
0
\end{array}
$$
where $\overline{S}\subseteq S.$ Now observe that $\vartheta_G- [(\vartheta_Z-F)\sqcup \overline{D}]=(-d_0+F) \sqcup (\vartheta_G- \overline{D})$, so we can apply Proposition \ref{dual} and deduce that for  $\overline{S}$ there are $4$ possibilities depending on its cardinality.
\begin{itemize}
	\item [0)] $\overline{S}=\emptyset;$
	\item [1)] $|\overline{S}|=1$ in such a case, by Proposition 3.7 and Remark 3.8 in \cite{RZ1}, $\overline{S}=\{\vartheta_G/2\};$
	\item [2)] $|\overline{S}|=2$ in such a case, by Proposition 3.7 and Remark 3.8 \cite{RZ1}, $\overline{S}=\{\{\alpha, \vartheta_G-\alpha\}\}$ for some $\alpha;$
	\item [3)] $|\overline{S}|=3$ in such a case, by Proposition 3.7 and Remark 3.8 \cite{RZ1}, $\overline{S}=\{\{\vartheta_G/2,\alpha, \vartheta_G-\alpha\}\}$ for some $\alpha.$
\end{itemize}
In particular, if $\vartheta_G/2 \notin \Supp S$ then $|\overline{D}|+|F|$ is odd and consequently $|S|+|F|$ is even. It is easy to produce examples in which  $\vartheta_G/2 \in \Supp S$ and $|S|+|F|$ is even and examples in which $\vartheta_G/2 \in \Supp S$ and $|S|+|F|$ is odd.
\end{rem}

Now we prove two technical lemmas which will be crucial for the characterization of the Betti sequences of almost complete intersections.

\begin{lem}\label{magg}
Let $A_G$ be an Artinian Gorenstein graded algebra of codimension $3$ whose Betti sequence is $\beta_G.$ Let 
$(d_1,d_2,d_3)\ge(e_1,e_2,e_3):=\mci\beta_G,$ with $d_1\le d_2\le d_3$ and $e_1\le e_2\le e_3.$
\begin{itemize}
	\item[1)]If $d_i=e_i$ for some $i$ then for every regular sequence $(g_1,g_2,g_3)$ in $I_G,$ with $\deg g_j=d_j$ for $1\le j\le 3,$ $g_i$ is a minimal generator for $I_G.$ 
	\item[2)]If $d_i>e_i$ for some $i$ and $I_G$ has a minimal generator of degree $d_i$ then there are in $I_G$ regular sequences $(g_1,g_2,g_3),$ with $\deg g_j=d_j$ for $1\le j\le 3,$ such that $g_i$ is a minimal generator for $I_G$ and regular sequences $(h_1,h_2,h_3),$ with $\deg h_j=d_j$ for $1\le j\le 3,$ such that $h_i$ is not a minimal generator for $I_G.$
\end{itemize}
\end{lem}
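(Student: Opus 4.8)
The plan is to reformulate minimal generation membership-theoretically: a form $g$ of degree $d$ is a minimal generator of $I_G$ exactly when $g\notin R_1(I_G)_{d-1}$, i.e. when its image in $(I_G)_d/R_1(I_G)_{d-1}$ is nonzero. Since the coordinates of $\mci\beta_G$ are, by the explicit formula in Theorem~\ref{c3b}, among the generator degrees of $I_G$, the subspace $R_1(I_G)_{e_i-1}\subseteq(I_G)_{e_i}$ is proper, so ``$g_i$ is a minimal generator'' cuts out an open dense subset of $(I_G)_{d_1}\oplus(I_G)_{d_2}\oplus(I_G)_{d_3}$, while ``$(g_1,g_2,g_3)$ is a regular sequence'' is an open, and by Theorem~\ref{c3b} nonempty, condition on the same space. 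The whole lemma amounts to deciding whether the regular-sequence locus is \emph{contained in}, or merely \emph{meets}, the minimal-generator locus and its complement.

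For part~1) I would argue by contradiction from the minimality of $\mci\beta_G$. Assume $d_i=e_i$ but some regular sequence has $g_i\in R_1(I_G)_{d_i-1}$, say $g_i=\sum_k x_k h_k$ with $h_k\in(I_G)_{d_i-1}$. The two remaining elements form a regular pair, so their common zeros form a finite subset of $\pp^2$; a general combination $h=\sum_k\lambda_k h_k\in(I_G)_{d_i-1}$ avoids that finite locus, and replacing $g_i$ by $h$ yields a new regular sequence whose $i$-th degree has dropped to $d_i-1$. When $d_{i-1}<d_i$ the sorted new type has $i$-th entry $e_i-1<e_i$, hence is not $\ge\mci\beta_G$, contradicting Theorem~\ref{c3b}. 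The case $d_{i-1}=d_i=e_i$ is the point I expect to be the main obstacle: there the lowering only returns the type to $\mci\beta_G$ and gives no contradiction. To close it I would invoke Proposition~\ref{BC}: in the generic case ($B\neq\emptyset$) the degree $e_i$ is a generator degree indexed by $\overline B$, so by \ref{BC}(a) it carries the maximal number $-\Delta^2H(e_i)=\Mng_H(e_i)$ of minimal generators and by \ref{BC}(c) no other $\overline B$-index shares that degree; together with the self-duality recorded in Remark~\ref{minres} (and, when $B=\emptyset$, the restriction $\overline B\subseteq\{n+2\}$ of Remark~\ref{Bvuoto}) this should force a regular sequence of the tie type to exhaust the minimal generators available in degree $e_i$, so that every degree-$e_i$ element, in particular $g_i$, is a minimal generator.

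For part~2) I would exhibit the two types of regular sequence by separate constructions. Because $I_G$ has a generator in degree $d_i$, the locus ``$g_i$ a minimal generator'' is open and dense, and it therefore meets the nonempty open regular-sequence locus inside the irreducible space $(I_G)_{d_1}\oplus(I_G)_{d_2}\oplus(I_G)_{d_3}$, producing $(g_1,g_2,g_3)$ with $g_i$ minimal. For a sequence with $g_i$ \emph{not} minimal I would exploit the slack $d_i>e_i$, which gives $d_i-1\ge e_i\ge\delta_1$, hence $(I_G)_{d_i-1}\neq 0$: choose general $g_j$ $(j\neq i)$ of the required degrees forming a regular pair with finite common zero set $\Sigma\subset\pp^2$, then (using that $V(I_G)=\{0\}$, so $(I_G)_{d_i-1}$ has no base points on $\Sigma$) pick $h\in(I_G)_{d_i-1}$ and a general linear form $\ell$ with $\ell h$ nonvanishing on $\Sigma$, and set $g_i:=\ell h$. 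Then $g_i\in R_1(I_G)_{d_i-1}$ is not a minimal generator, while $(g_1,g_2,g_3)$ is a regular sequence of type $(d_1,d_2,d_3)$.

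In summary, the clean mechanism is the open/closed dichotomy of the previous paragraphs: part~1 is the assertion that the regular-sequence locus is swallowed by the minimal-generator open set precisely when $d_i=e_i$, and part~2 that it escapes into the complement precisely when $d_i>e_i$ (with a generator in degree $d_i$). The purely geometric replacement and genericity arguments dispatch everything except the equal-degree configurations of part~1, and those are where the real content lies, to be handled by the generator counts of Proposition~\ref{BC} and the duality of Remark~\ref{minres}.
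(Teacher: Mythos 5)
Your untied cases do match the paper: the replacement--plus--minimality argument for part 1) when $d_{i-1}<d_i$ is exactly the paper's (which phrases it as: $d_1$ is the initial degree of $I_G$ for $i=1$, and $\depth(I_G)_{\le d_i-1}=i-1$ for $i=2,3$, all consequences of Theorem \ref{c3b}), and your generic-choice argument for the minimal-generator half of part 2) is also the paper's. The two steps you treat differently are both genuinely gapped. The tie case of part 1) cannot be closed by the bookkeeping of Proposition \ref{BC}, because what you aim at there is false: your target conclusion (``every degree-$e_i$ element is a minimal generator'') would force $(I_G)_{\le e_i-1}=0$, impossible as soon as $e_i>e_1$. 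Worse, parts 1) and 2) contradict each other in the tie case $d_{i-1}=d_i=e_i>e_{i-1}$ whenever $I_G$ has a minimal generator of degree $d_i$: part 2) at the index $i-1$ (its hypotheses hold, since $d_{i-1}=e_i>e_{i-1}$) produces a regular sequence whose $(i-1)$-st entry is not a minimal generator, and swapping the $(i-1)$-st and $i$-th entries, which have equal degree, violates part 1) at the index $i$. This configuration really occurs: for generator degrees $\{2,3,3,4,4\}$ Theorem \ref{c3b} gives $\mci\beta_G=(2,3,4)$, and some such $I_G$ contains a regular sequence $(f,u,v)$ of type $(2,3,4)$; for a general linear form $\ell$ the sequence $(u,\ell f,v)$ is regular (its entries have no common zero in $\pp^2$), has type $(3,3,4)\ge(2,3,4)$ with $d_2=e_2=3$, yet its middle entry $\ell f\in\gothic{m}I_G$ is not a minimal generator. (The paper's own three-line proof likewise covers only the untied situation.) What is true, what your replacement argument does prove if you apply it to all non-minimal entries of a fixed degree simultaneously and then compare with $\mci\beta_G$, and what the proof of Theorem \ref{betti} actually uses (jointly with Lemma \ref{zzz}), is the counting form: in a regular sequence of type $(d_1,d_2,d_3)$ the number of entries of degree $s$ that are not minimal generators is at most $|\{j\mid d_j=s,\ d_j>e_j\}|.$

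The second gap is in the non-minimal half of part 2): the justification ``$V(I_G)=\{0\}$, so $(I_G)_{d_i-1}$ has no base points on $\Sigma$'' is wrong. Artinianness says nothing about base points of a single graded piece; in the example above, all of $(I_G)_3$ vanishes at the (necessarily nonempty) common zeros of $f$ and the two cubic generators. Moreover $\Sigma=V(g_j)\cap V(g_k)$ contains, for \emph{every} choice of $g_j,g_k$, all common base points of $(I_G)_{d_j}$ and $(I_G)_{d_k}$, and nothing in your argument prevents such a point from also being a base point of $(I_G)_{d_i-1}$; at such a point every degree-$d_i$ element of $\gothic{m}I_G$ vanishes, so no admissible $g_i$ exists at all. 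The paper sidesteps this by anchoring the whole construction to a regular sequence $(f_1,f_2,f_3)$ of type $\mci\beta_G$ (whose existence is the content of Theorem \ref{c3b}): it sets $h_j:=f_ja_j$ with $a_j$ a generic form of degree $d_j-e_j$, so that $h_i$ is non-minimal automatically (as $\deg a_i\ge 1$) and regularity follows from $V(f_1)\cap V(f_2)\cap V(f_3)=\emptyset$ together with genericity of the $a_j$. Your construction can be repaired, but only by invoking that same sequence (it shows the base locus of $(I_G)_{d_j}$ lies inside $V(f_j)$, hence $\Sigma$ avoids the base locus of $(I_G)_{d_i-1}$ for general $g_j,g_k$); that regular sequence of type $\mci\beta_G$ is precisely the ingredient this step of your proposal never uses.
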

\begin{proof}
1) Let us suppose that $d_i=e_i$ for some $i$ and let us consider a regular sequence $(g_1,g_2,g_3)$ in $I_G,$ with $\deg g_j=d_j$ for $1\le j\le 3.$ It is enough to remind that if $i=1$ then $d_1=\min\{\deg f\mid f\in I_G\};$ if $i=2$ then $\depth(I_G)_{\le d_2-1}=1;$ if $i=3$ then $\depth(I_G)_{\le d_3-1}=2.$

2) Let us suppose that $d_i>e_i$ for some $i$ and that $I_G$ has a minimal generator of degree $d_i.$ Let 
	$(f_1,f_2,f_3)$ be a regular sequence in $I_G$ such that $\deg f_j=e_j$ for $1\le j\le 3.$ If we choice $h_j:=f_ja_j,$ where $a_j$ is a generic form of degree $d_j-e_j,$ for $1\le j\le 3,$ we get a regular sequence in which $h_i$ is not a minimal generator for $I_G.$ Now we set $g_j:=f_ja_j,$ where $a_j$ is a generic form of degree $d_j-e_j$ for $j\ne i$ and we take as $g_i$ a generic form in $(I_G)_{d_i}.$ Of course, $(g_1,g_2,g_3)$ is a regular sequence and, since by hypothesis $I_G$ has minimal generators in degree $d_i,$ $g_i$ will be a minimal generator for $I_G.$ 
\end{proof}

\begin{lem}\label{zzz}
Let $A_{\Gamma}=R/I_{\Gamma}$ be an Artinian Gorenstein graded algebra of codimension $3,$ whose last syzygy degree is $\vartheta_{\Gamma}.$ Let $(f_1,f_2,f_3)$ be a regular sequence in $I_{\Gamma},$ with $\deg f_i=d_i$ and $d_1\le d_2\le d_3.$ Let us suppose that $d_i+d_j=\vartheta_{\Gamma}$ and  $f_i,$ $f_j$ are minimal generators for $I_{\Gamma}$ for some $1 \le i<j\le 3.$ Let $A_{G}$ be an Artinian Gorenstein graded algebra of codimension $3$ whose Betti sequence is obtained from the Betti sequence of $A_{\Gamma}$ by deleting the degrees $d_i$ and $d_j$ between generators and first syzygies. If $(e_1,e_2,e_3):=\mci\beta_G,$ then $d_i>e_i$ and $d_j>e_j.$
\end{lem}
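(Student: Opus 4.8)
The plan is to reduce the assertion to a comparison, carried out via Theorem~\ref{c3b}, of the two minimal complete intersections $\mci\beta_\Gamma$ and $\mci\beta_G$, exploiting that the deleted pair of degrees sums exactly to the common last syzygy degree.

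First I would record two reductions. Let $H$ be the (sorted) multiset of generator degrees of $I_\Gamma$, say $|H|=2n+1$, and $H_G=H\setminus\{\{d_i,d_j\}\}$ that of $I_G$; put $c_a:=d_i\le c_b:=d_j$, so that $c_a+c_b=\vartheta_\Gamma$. Since for a codimension $3$ Gorenstein algebra $\vartheta=2\|H\|/(|H|-1)$, the identities $|H_G|=|H|-2$ and $\|H_G\|=\|H\|-(c_a+c_b)=\|H\|-\vartheta_\Gamma$ give at once $\vartheta_G=\vartheta_\Gamma=:\vartheta$. Moreover, passing from $\beta_\Gamma$ to $\beta_G$ removes the degrees $d_i,d_j$ from two \emph{consecutive} free modules (generators and first syzygies), so the alternating sum defining the Hilbert series is unchanged; hence $A_\Gamma$ and $A_G$ share one Hilbert function $h$, and I may apply Remark~\ref{maxg} and Proposition~\ref{BC} to both with the same $h$.

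Next, write $a:=\mci\beta_\Gamma$ and $e:=\mci\beta_G$, both sorted increasingly. As $(f_1,f_2,f_3)$ is a regular sequence in $I_\Gamma$ we have $(d_1,d_2,d_3)\ge a$, so $a_l\le d_l$ for all $l$, while $d_i=c_a$ and $d_j=c_b$. The Gaeta--Diesel inequality $\vartheta>c_2+c_{2n+1}\ge c_2+c_b$ forces $c_2<c_a$, whence $c_a>c_2\ge c_1=\min H=\min H_G=e_1$; in particular $e_i<d_i$ already when $i=1$. It then suffices to establish the coordinatewise bound $e\le a$ (call it (0)) together with the two \emph{drop lemmas}: (S-i) if $a_i=c_a$ then $e_i<c_a$; and (S-j) if $a_j=c_b$ then $e_j<c_b$. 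Indeed, a dichotomy at each of the positions $i,j$ then finishes the proof: at $i$, either $a_i<c_a=d_i$ and $e_i\le a_i<d_i$ by (0), or $a_i=c_a$ and $e_i<c_a=d_i$ by (S-i); symmetrically at $j$ with (0) and (S-j). Thus $d_i>e_i$ and $d_j>e_j$.

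The heart of the matter, and the step I expect to be the main obstacle, is (0) together with the drop lemmas, which I would prove by evaluating the explicit formula of Theorem~\ref{c3b} on $H$ and on $H_G=H\setminus\{\{c_a,c_b\}\}$ for the common value $\vartheta$. The key observation is that $c_a+c_b=\vartheta$ is precisely the \emph{boundary} instance of the conditions defining the sets $B$ and $C$: a coordinate of $\mci\beta_\Gamma$ equal to $c_a$ (resp.\ $c_b$) arises from an index $l$ satisfying $\vartheta\le c_l+c_{2n+4-l}$ in which the reflected entry is exactly the partner $c_b$ (resp.\ $c_a$). Deleting the pair $\{\{c_a,c_b\}\}$ removes that partner and shortens $H$ by $2$, so the reflection index drops (from $2n+4-l$ to $2n+2-l$) and the analogous sum in $\beta_G$ is now governed by a \emph{neighbouring} generator; the \emph{strict} Gaeta--Diesel inequalities for the admissible sequence $\beta_G$ (together with Proposition~\ref{BC}(c)) force this neighbouring sum to be $<\vartheta$, so the index leaves $B_G$ and the corresponding coordinate of $\mci\beta_G$ falls \emph{strictly} below $c_a$, resp.\ $c_b$. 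The same comparison of $\min B,\max B,\max C$ for $H$ and $H_G$, with multiplicities tracked through the shared Hilbert function $h$ via Proposition~\ref{BC}, yields (0). I expect the bookkeeping to split along the three branches of Theorem~\ref{c3b} and to require separate care in the degenerate case $c_a=c_b=\vartheta/2$ (two deleted degrees of equal value), where Proposition~\ref{BC}(c) prevents both first-occurrence indices from lying in $\overline B$; in every branch the strictness of the drop is exactly what the strict Gaeta--Diesel inequalities supply.
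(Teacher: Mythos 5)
Your reductions and overall skeleton are sound and essentially coincide with the paper's: indeed $\vartheta_G=\vartheta_\Gamma$, the two algebras share the Hilbert function $H$, and the paper also proceeds exactly by your dichotomy, assuming $\epsilon_i:=(\mci\beta_\Gamma)_i=d_i$ and proving a strict drop for $\mci\beta_G$. (For your bound (0) the paper does not redo any bookkeeping: it quotes \cite{RZ3}, Theorem 3.9, for $\mci\beta_G\le\mci\beta_\Gamma\le(d_1,d_2,d_3)$; your plan to re-derive (0) by comparing the sets $B,C$ for $H$ and $H_G$ is extra unproved work.) Your treatment of position $1$ via Gaeta--Diesel is correct. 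The genuine gap is in the ``drop lemmas'', which you yourself flag as the main obstacle: they are only sketched, and the sketched mechanism fails precisely in the hardest case.

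Concretely, write $\gamma_1\le\ldots\le\gamma_{2m+1}$ for the generator degrees of $I_\Gamma$ and $g_h$ for those of $I_G$. Two claims in your third paragraph are not correct. First, if $\epsilon_2=\gamma_{\max B_\Gamma}=c_a$, the reflected entry satisfies only $\gamma_{2m+4-\max B_\Gamma}\ge\vartheta_\Gamma-c_a=c_b$; it need not equal the deleted partner, so deleting $\{\{c_a,c_b\}\}$ does not simply ``remove that partner''. Second, and more seriously, ``the index leaves $B_G$, so the corresponding coordinate falls strictly below'' is a non sequitur, in both directions: the relevant sum need not drop below $\vartheta_\Gamma$ at all (that is the sub-case $B_G\ne\emptyset$, which however can be settled by the generator-count argument, i.e.\ your multiplicity tracking via $\Mng_H$, Remark~\ref{maxg} and Proposition~\ref{BC}(a),(d) --- this also settles position $2$); and when $B_G=\emptyset$, Theorem~\ref{c3b} switches branch, $e_3=g_{\max C_G}$, and since generator degrees may repeat this value can a priori still equal $d_3$. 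Proposition~\ref{BC}(c) only forbids equal degrees at two indices of $\overline B$, and counts are powerless here because $I_G$ having $\Mng_H(d_3)-1$ generators in degree $d_3$ is perfectly consistent with Proposition~\ref{BC}(b),(e). This sub-case ($B_\Gamma\ne\emptyset$, $B_G=\emptyset$) is where the paper does its real work: setting $b:=\max B_\Gamma$ it shows $d_2=\gamma_b$, $d_3=\gamma_{2m+4-b}$, tracks indices through the deletion to get $b\in C_G$ and $e_3=g_c=\gamma_{c+1}$ for $c:=\max C_G$, and then, assuming $e_3=d_3$, produces a chain of equalities $\gamma_h=\gamma_{2m+4-b}$ for $c+1\le h<2m+4-b$ forcing $c=b=m+1$ and $\gamma_{m+2}=\gamma_{m+3}$, which contradicts Gaeta--Diesel via $\vartheta_\Gamma=\gamma_{m+1}+\gamma_{m+2}<\vartheta_\Gamma$. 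Your appeal to the strict Gaeta--Diesel inequalities is indeed the right closing ingredient, but it only bites after this chain argument, which is absent from your proposal; note also that the truly delicate degeneracy is repeated degrees near the middle of $H$, not the case $c_a=c_b=\vartheta_\Gamma/2$ that you single out.
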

\begin{proof}
Along this proof we let $\gamma_1\le\ldots\le\gamma_{2m+1}$ be the degrees of a minimal set of generators for $I_{\Gamma},$ $g_1\le\ldots\le g_{2m-1}$ be the degrees of a minimal set of generators for $I_{G}$ and $(\epsilon_1,\epsilon_2,\epsilon_3):=\mci\beta_{\Gamma}.$ 

We start with the more delicate case, i.e. for $i=2$ and $j=3.$ By \cite{RZ3} Theorem 3.9, we have $\mci\beta_G\le\mci\beta_{\Gamma}\le (d_1,d_2,d_3),$ so if $\epsilon_2<d_2$ the conclusion follows for $d_2.$ Therefore we can suppose that $\epsilon_2=d_2.$
If $A$ is an Artinian Gorenstein graded algebra of codimension $3,$ whose Hilbert function is $H,$ we write $\Mng_H(n)$ as the largest number of minimal generators of degree $n$ compatible with an Artinian Gorenstein graded algebra of codimension $3$ having Hilbert function equal to $H$ (see Remark \ref{maxg}).
\par
Now, if $H$ is the Hilbert function of $A_{\Gamma},$ according to Proposition \ref{maxg} and Theorem \ref{c3b}, either $\epsilon_2=\gamma_2$ or $I_{\Gamma}$ has $\Mng_H(\epsilon_2)$ minimal generators of degree $\epsilon_2.$ But $\epsilon_2=\gamma_2$ implies that $\gamma_2=\epsilon_2=d_2=\vartheta_{\Gamma}-d_3$ is a degree of a first syzygy of $I_{\Gamma}$ which is clearly impossible. So, $I_{\Gamma}$ has $\Mng_H(\epsilon_2)$ minimal generators of degree $\epsilon_2=d_2.$ 
On the other hand the Hilbert 
function of $A_G$ is still $H$ and $I_G$ has $\Mng_H(d_2)-1$ minimal generators in degree $d_2.$ 
This implies, again by Proposition \ref{maxg} and Theorem \ref{c3b}, that $e_2<\epsilon_2=d_2.$ 
\par
Now, if $d_3>\epsilon_3$ we are done. So we can assume that $d_3=\epsilon_3.$ Observe that first  $\epsilon_3 > \gamma_3:$ indeed, otherwise, $d_3= \epsilon_3 = \gamma_3$ and consequently $d_2= \epsilon_2 = \gamma_2$ (we remind that $d_2$ and $d_3$ are degrees of minimal generators for $I_{\Gamma}$) and we should have again a first syzygy of degree $\gamma_2$ for $I_{\Gamma}.$ So we have two possibilities: either $\epsilon_2=\gamma_2$ or $\epsilon_2>\gamma_2.$ If $\epsilon_2=\gamma_2,$ since $\epsilon_3 > \gamma_3$, by Theorem \ref{c3b} $B_{\Gamma}=\emptyset$ and $C_{\Gamma}\ne \emptyset.$ If $\overline{B}_{\Gamma}=\emptyset$ then $I_{\Gamma}$ has $\Mng_H(\epsilon_3)-1$ minimal generators in degree $\epsilon_3.$ Then $e_2=\gamma_2$ and $I_G$ has $\Mng_H(\epsilon_3)-2$ generators in degree $\epsilon_3;$ therefore $e_3<\epsilon_3,$ i.e. $d_3>e_3.$ If $\overline{B}_{\Gamma}=\{n+2\}$ (see Remark \ref{Bvuoto}) then $\gamma_{n+2}\ge \vartheta_{\Gamma}-\gamma_{n+2}>\gamma_{n+1}$ hence $e_3\le g_{n+1}=\gamma_{n+1}<\gamma_{n+2}=\epsilon_3$ (note that, in this case,  $g_i=\gamma_i,$ for $i\le n+1$).

Finally, if $\epsilon_2>\gamma_2$ then 
$$B_{\Gamma}=\{3\le h\le m+1\mid\vartheta_{\Gamma}\le\gamma_h+\gamma_{2m+4-h}\}\ne \emptyset$$ 
(as defined in Theorem \ref{c3b}) and consequently $I_{\Gamma}$ has $\Mng_H(d_3)$ minimal generators in degree $d_3.$  Let
 $$B_{G}=\{3\le h\le m\mid\vartheta_{\Gamma}\le g_h+g_{2m+2-h}\}$$
and
 $$C_G=\{4\le h\le m+1\mid\vartheta_{\Gamma}\le g_h+g_{2m+3-h}\}$$
Now, if $B_G\ne\emptyset,$ since $I_G$ has $\Mng_H(d_3)-1$ minimal generators in degree $d_3,$ we see that $e_3<\epsilon_3=d_3.$ If $B_G=\emptyset$ and we set $b:=\max B_{\Gamma},$ there exist only two integers, $\gamma_b$ and $\gamma_{2m+4-b},$ in which $I_{\Gamma}$ has the maximum number of minimal generators allowed by $H.$ Therefore $\vartheta_{\Gamma}-\gamma_b=\gamma_{2m+4-b}=\epsilon_3=d_3$ and this implies also that $d_2=\gamma_b=\epsilon_2.$
Moreover $g_h=\gamma_h$ for $1\le h\le b-1,$ $g_h=\gamma_{h+1}$ for $b\le h\le 2m+2-b$ and $g_h=\gamma_{h+2}$ for $2m+3-b\le h\le 2m-1.$ Therefore      $$g_b=\gamma_{b+1}\ge\gamma_b=\vartheta_{\Gamma}-\gamma_{2m+4-b}\ge\vartheta_{\Gamma}-\gamma_{2m+5-b}=
\vartheta_{\Gamma}-g_{2m+3-b}$$
i.e. $b\in C_G.$ Let $c:=\max C_G.$ Of course $b\le c\le m+1\le 2m+2-b,$ so $e_3=g_c=\gamma_{c+1}.$ Therefore we have $g_c=\gamma_{c+1}=e_3 \le \epsilon_3=d_3=\gamma_{2m+4-b}.$  Suppose that $e_3= \epsilon_3=d_3,$ i.e. $\gamma_{c+1}=\gamma_{2m+4-b}.$ Since $c+1\le m+2<2m+4-b,$ we have  
$\gamma_h=\gamma_{2m+4-b}$ for $c+1\le h<2m+4-b.$ Consequently,
 $\gamma_{h}+\gamma_{2m+4-h}\ge\gamma_{2m+4-b}+\gamma_b=\vartheta_{\Gamma},$ i.e. $h$ and $2m+4-h$ satisfy the inequality to stay in $B_{\Gamma};$ but $h>b$ and $2m+4-h>b$ therefore $h\not\in B_{\Gamma}$ and $2m+4-h\not\in B_{\Gamma}$ so we get that $h\ge m+2$ and $2m+4-h\ge m+2$ i.e. $h=m+2.$ So we are reduced to the case $c+1=m+2$ and $2m+4-b=m+3$ i.e. $c=b=m+1$ and $\gamma_{m+2}=\gamma_{m+3}.$ So we have that
    $$\vartheta_{\Gamma}=\gamma_b+\gamma_{2m+4-b}=\gamma_{m+1}+\gamma_{m+3}=
    \gamma_{m+1}+\gamma_{m+2}<\vartheta_{\Gamma}$$
where the last inequality holds for the Gaeta-Diesel conditions. So we get a contradiction and consequently $e_3<\epsilon_3=d_3.$\\
The remaining cases are simpler. So, if $i=1$ and $j=2$ since $\vartheta_{\Gamma}-d_2=d_1$ is a first syzygy for $I_{\Gamma}$ we have $d_1>\epsilon_1\ge e_1.$ Moreover, $d_2=\vartheta_{\Gamma}-d_1\ge \vartheta_{\Gamma}/2\ge \gamma_{m+1}\ge \epsilon_2.$ thus, if $d_2=\epsilon_2$ we should have $d_2=\vartheta_{\Gamma}/2=d_1=\gamma_{m+1}$ and, consequently, $\vartheta_{\Gamma}/2=\vartheta_{\Gamma}-\gamma_{m+1}> \gamma_{m+2}\ge \vartheta_{\Gamma}/2,$ a contradiction, therefore $d_2>\epsilon_2\ge e_2.$\\
Finally, if $i=1$ and $j=3,$ as before we easily get that $d_1>\epsilon_1\ge e_1.$ Of course, we can suppose $d_3=\epsilon_3.$ We then show that $B_{\Gamma}\ne \emptyset:$ namely, otherwise,  $d_3=\epsilon_3\le \gamma_{m+2},$ let us say $d_3=\gamma_h$ for some $h\le m+2.$ Thus we should have
$$d_1=\vartheta_{\Gamma}-d_3=\vartheta_{\Gamma}-\gamma_h>\gamma_{2m+3-h}\ge \gamma_{m+1}\ge d_2$$
a contradiction. Therefore, we have $B_{\Gamma}\ne \emptyset$ and as in the case $i=2$ and $j=3$ we are reduced to the case in which $B_{\Gamma}$ has only one element $b.$ In this case we have $\epsilon_2=\gamma_b$ and $\epsilon_3=d_3=\gamma_{2m+4-b};$ then $d_1=\vartheta_{\Gamma}-d_3=\vartheta_{\Gamma}-\gamma_{2m+4-b}>\gamma_{b-1}$ and $\gamma_{b}\ge \vartheta_{\Gamma}-d_3=d_1$ since $b \in B_{\Gamma},$ i.e. $d_1=\gamma_b$ and the conclusion $e_3<\epsilon_3=d_3$ follows as in the final part of  the previous case $i=2$ and $j=3.$

\end{proof}

\begin{thm}\label{betti}
Let $(D,E,F)$ be a sequence of multisets of positive integers. 
$(D,E,F)$ is a Betti sequence admissible for an Artinian almost complete intersection algebra of codimension $3$ if and only if it satisfies the following conditions:
\begin{itemize}
	\item[1)]it is an \aci-type sequence;
	\item[2)]$\beta_G:=(G_0,G_1,G_2)$ is the Betti sequence of a $3$-codimensional Gorenstein Artinian graded algebra, where 
	$$G_0:=(\vartheta_Z-F)\sqcup\overline{D}\sqcup T$$
  $$G_1:=(-d_0+F)\sqcup(\vartheta_G-\overline{D})\sqcup T$$
  $$G_2:=\{\vartheta_G\}$$
with $\vartheta_G:=\vartheta_Z-d_0$ and $T:=\{\vartheta_G/2\}$ when $\vartheta_G/2\in \Supp S$ and $|F|+|\overline{D}|$ is even and  $T:=\emptyset$ otherwise.
\item[3)]if we set $(e_1,e_2,e_3):=\mci\beta_G,$ $e_1\le e_2\le e_3,$ $D^*=\{\{d_1,d_2,d_3\}\},$ $d_1\le d_2\le d_3$ then $d_i\ge e_i$ for $1\le i\le 3$ and for every $s\in \Supp (S\setminus T),$ $d_i>e_i$ for $i:=\min\{j\mid d_j=s\}+\mu_{S\setminus T}(s)-1.$
\end{itemize}
\end{thm}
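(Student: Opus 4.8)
The plan is to prove both implications by exploiting the direct link to the Gorenstein ideal $I_G=I_Z:I_Q$ and reading off, respectively reconstructing, the numerical data through the mapping cone of Lemma~\ref{l1} and Remark~\ref{minres}. Throughout I keep the notation $\widehat{E},S,\overline{D},\vartheta_Z,\vartheta_G$ of Definition~\ref{aci} and Remark~\ref{minres}, and I write $(e_1,e_2,e_3)=\mci\beta_G$.

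For necessity I would start from an Artinian almost complete intersection $A_Q$ with $\beta_{A_Q}=(D,E,F)$. Condition~1 is exactly Lemma~\ref{l1}. For condition~2, fix a regular sequence of type $(d_1,d_2,d_3)$ among the minimal generators of $I_Q$ and form $I_G=I_Z:I_Q$; this is Gorenstein of depth $3$, and Remark~\ref{minres} exhibits its minimal resolution with $\overline{S}\subseteq S$. The only freedom is the cardinality of $\overline{S}$, and the requirement that the number of minimal generators $|G_0|=|F|+|\overline{D}|+|\overline{S}|$ be odd forces $\overline{S}=T$: the parity of $|\overline{S}|$ is determined, and the self-duality recorded in Remark~\ref{minres} (together with the fact that $\vartheta_G/2\notin\Supp S$ implies $|F|+|\overline{D}|$ odd) pins down $\overline{S}=\emptyset$ or $\overline{S}=\{\vartheta_G/2\}$, which is precisely the definition of $T$. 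Hence $\beta_G=(G_0,G_1,G_2)$ is the genuine Betti sequence of $A_G$, so it is admissible. For condition~3, since $I_G$ contains the regular sequence of type $(d_1,d_2,d_3)$ we have $(d_1,d_2,d_3)\in\CI^g_{\beta_G}$, whence $d_i\ge e_i$ by Theorem~\ref{c3b}. The degrees in $S\setminus T$ are exactly those among $d_1,d_2,d_3$ whose regular-sequence element is not a minimal generator of $I_G$, i.e. the summands that cancel in the mapping cone; by the contrapositive of Lemma~\ref{magg}(1) each such degree satisfies $d_i>e_i$ at the relevant index, and when $S\setminus T$ is a self-dual pair $\{\{\alpha,\vartheta_G-\alpha\}\}$ the strict inequality is supplied instead by Lemma~\ref{zzz}. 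The index bookkeeping is just the remark that, $e_i$ being non-decreasing and the cancelled copies of a value $s$ occupying the lowest positions, strictness at the top cancelled index $i=\min\{j\mid d_j=s\}+\mu_{S\setminus T}(s)-1$ suffices.

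For sufficiency I would run the construction backwards. Condition~2 and Diesel's theorem produce an Artinian Gorenstein algebra $A_G=R/I_G$ with $\beta_{A_G}=\beta_G$. Condition~3 gives $(d_1,d_2,d_3)\ge\mci\beta_G$, so by Theorem~\ref{c3b} the type $(d_1,d_2,d_3)$ lies in $\CI^g_{\beta_G}$ and a regular sequence of that type sits inside $I_G$. Using Lemma~\ref{magg}(2) together with the strict inequalities of condition~3 (and Lemma~\ref{zzz} for the self-dual pair), I choose this regular sequence so that its components in the degrees of $S\setminus T$ are \emph{not} minimal generators of $I_G$, while the components in the degrees of $T$ remain minimal; this forces the mapping-cone cancellations to be exactly $S\setminus T$. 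Setting $I_Z$ to be the ideal generated by this sequence and $I_Q:=I_Z:I_G$, Proposition~\ref{constr} (equivalently the mapping cone of Lemma~\ref{l1}) shows $I_Q$ is an almost complete intersection and computes its minimal resolution, and the aci-type identities of condition~1 guarantee that the surviving degrees reassemble to $(D,E,F)$, with the fourth generator appearing in degree $d_0=\min D$ as in Corollary~\ref{d0}.

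The delicate point, in both directions, is the exact matching between the cancellable summands $S\setminus T$ of the Gorenstein resolution and the choice of which regular-sequence elements are minimal generators of $I_G$. This is where the parity and self-duality analysis for $T$ (Remark~\ref{minres}) must be reconciled with the per-degree control of Lemma~\ref{magg} and the pairwise control of Lemma~\ref{zzz}: one must check that a self-dual pair can be cancelled simultaneously precisely when both members strictly exceed the corresponding entries of $\mci\beta_G$, and that the multiplicity bookkeeping of condition~3 selects a consistent set of non-generators at each repeated degree. Verifying that these local choices can be made coherently, so that the reconstructed resolution is genuinely minimal and of the prescribed shape, is the main obstacle.
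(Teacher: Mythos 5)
Your overall strategy is the paper's (link to a Gorenstein ideal, read conditions 1)--3) off the mapping cone via Lemma~\ref{l1} and Remark~\ref{minres}, then reverse the construction with Lemma~\ref{magg}), but your necessity argument for condition 2) has a genuine gap. You claim that the parity of $|G_0|=|F|+|\overline{D}|+|\overline{S}|$ together with self-duality \emph{forces} $\overline{S}=T$. It does not: parity only determines $|\overline{S}|\bmod 2$, and Remark~\ref{minres} explicitly allows $\overline{S}=\{\{\alpha,\vartheta_G-\alpha\}\}$ and $\overline{S}=\{\{\vartheta_G/2,\alpha,\vartheta_G-\alpha\}\}$, i.e.\ $\overline{S}=T\sqcup\{\{\alpha,\vartheta_G-\alpha\}\}$. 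In that case the linked Gorenstein algebra $A_\Gamma$ does \emph{not} have Betti sequence $\beta_G$; its Betti sequence $\beta_\Gamma$ carries an extra self-dual pair. The paper bridges this with two steps you omit: first, deleting a self-dual pair from an admissible Gorenstein Betti sequence yields an admissible one (\cite{RZ1}, Remark 3.8), which is what makes $\beta_G$ admissible even when $\beta_G\ne\beta_\Gamma$; second, $\beta_G\le\beta_\Gamma$ implies $\mci\beta_G\le\mci\beta_\Gamma$ (\cite{RZ3}, Theorem 3.9), which is what actually yields $d_i\ge e_i$ from $I_Z\subseteq I_\Gamma$. Your inference ``$(d_1,d_2,d_3)\in\CI^g_{\beta_G}$'' is unjustified, because the regular sequence sits inside $I_\Gamma$, which need not have Betti sequence $\beta_G$.

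The same error makes your treatment of condition 3) internally inconsistent. You assert that the degrees in $S\setminus T$ are exactly those whose regular-sequence element is not a minimal generator of the linked ideal; if that (and $\overline{S}=T$) were true, the contrapositive of Lemma~\ref{magg}(1) would suffice and Lemma~\ref{zzz} would be superfluous. But in the case $\overline{S}=T\sqcup\{\{\alpha,\vartheta_G-\alpha\}\}$ the two elements of degrees $\alpha$ and $\vartheta_G-\alpha$ \emph{are} minimal generators of $I_\Gamma$ while their degrees lie in $S\setminus T$, and Lemma~\ref{magg} gives nothing for them; this is precisely the situation Lemma~\ref{zzz} was built for (it compares $(d_1,d_2,d_3)$ with $\mci$ of the Betti sequence obtained by \emph{deleting} the pair). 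So although you cite the right lemma, your proof as structured can never reach the case where it is needed, and it cannot be repaired without first admitting $\overline{S}\ne T$ as a possibility. Your sufficiency half is essentially the paper's argument (Diesel plus condition 2) produce $A_G$, Lemma~\ref{magg}(2) plus the strict inequalities of condition 3) produce the regular sequence with the prescribed non-minimal members in the degrees of $S\setminus T$, and the mapping cone gives the resolution of shape $(D,E,F)$); the appeal to Lemma~\ref{zzz} there is unnecessary, since that lemma plays a role only in the necessity direction.
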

\begin{proof}
Let us suppose that $(D,E,F)$ is the Betti sequence of an almost complete intersection of codimension $3.$ By Lemma \ref{l1} $(D,E,F)$ is an \aci-type sequence. Using the same terminology of the Definition \ref{aci} we denote $D=\{\{d_0,d_1,d_2,d_3\}\}$ with $d_0\le d_1\le d_2\le d_3.$ Let $A_Q=R/I_Q$ be an almost complete intersection Artinian graded algebra with Betti sequence $(D,E,F)$ and $A_Z=R/I_Z$ be a complete intersection Artinian graded algebra of type $(d_1,d_2,d_3)$ with $I_Z$ generated by $f_1,f_2,f_3$ minimal generators of $I_Q.$ Note that such $A_Z$ must exist since $d_1,d_2,d_3\ge d_0.$ Let $I_{\Gamma}:=I_Z:I_Q.$ Of course, $A_{\Gamma}=R/I_{\Gamma}$ is a Gorenstein Artinian graded algebra. Using Remark \ref{minres} a minimal graded free resolution of $A_{\Gamma}$ will be of the type
$$
\begin{array}{c}
\displaystyle{\bigoplus_{h\in \vartheta_Z-F }R(-h)\oplus \bigoplus_{h\in \overline{D}}R(-h)	\oplus \bigoplus_{h\in \overline{S}}R(-h)}\\
\uparrow\\
\displaystyle{\bigoplus_{h\in -d_0+F }R(-h)\oplus \bigoplus_{h\in\vartheta_G- \overline{D}}R(-h)	\oplus \bigoplus_{h\in \overline{S}}R(-h)}\\
\uparrow\\
R(-\vartheta_{\Gamma})\\
\uparrow\\
0
\end{array}
$$
Using Remark \ref{minres} one sees that either $\overline{S}=T$ or $\overline{S}\setminus T=\{\{\alpha, \vartheta_G-\alpha\}\}$ for some $\alpha;$ therefore if we substitute in the previos resolution $\overline{S}$ with $T$ we get a graded  minimal free resolution of a Gorenstein Artinian graded algebra $A_G$ with $\vartheta_G=\vartheta_{\Gamma}$ (see \cite{RZ1} Remark 3.8). Hence condition $2)$ is verified. Now observe that $\beta_{G} \le \beta_{\Gamma}$ hence $\mci \beta_{G}\le \mci \beta_{\Gamma}$ (see \cite{RZ3} Theorem 3.9). Since $I_Z \subseteq I_{\Gamma}$ $(d_1,d_2,d_3)\ge \mci \beta_{\Gamma} \ge \mci \beta_G=(e_1,e_2,e_3).$ 
Let $s \in \Supp(S\setminus T)$ and $i=\min\{j\mid d_j=s\}+\mu_{S\setminus T}(s)-1,$  say $m:=\mu_{(S\setminus T)}(s).$
Then, among $f_1,f_2,f_3,$ there are $m$ forms, of degree $s,$ such that each of them is either a generator not minimal for $I_{\Gamma}$ or it is a minimal generator for $I_{\Gamma},$ but there exists a minimal generator for $I_{\Gamma}$ of degree $\vartheta_{\Gamma}-s.$
So, by Lemma~\ref{magg} and Lemma~\ref{zzz}, $|\{j\mid d_j=s,\,d_j>e_j\}|\ge m.$ Let $h:=\max\{j\mid d_j=s,\,d_j>e_j\}.$ Then 
 $$h\ge\min\{j\mid d_j=s,\,d_j>e_j\}+m-1\ge i:=\min\{j\mid d_j=s\}+m-1,$$
therefore $s=d_i=d_h>e_h\ge e_i$ and we are done.

Vice versa, since $(D,E,F)$ is \aci-type we have 
\begin{itemize}
	\item[1)]$D=\{d_0\} \sqcup \overline{D} \sqcup S,$ $|\overline{D} \sqcup S|=3;$
	\item[2)]$E=(d-F)\sqcup (d_0+\overline{D})\sqcup (\vartheta_Z-S).$ 	
\end{itemize}
Let $A_G=R/I_G$ be a $3$-codimensional Artinian Gorenstein algebra with Betti sequence $\beta_G=(G_0,G_1,G_2);$ by condition 3), using part 2) of Lemma \ref{magg} for every  $s \in \Supp (S\setminus T)$ with multiplicity $\mu_{S\setminus T}(s)=m$ we can find a regular sequence of lenght $3$ in $I_G$ with $m$ elements of degree $s$ which are not minimal generators for $I_G$ (and the other elements minimal generators for $I_G$). Say $I_Z$ the complete intersection generated by this regular sequence and $I_Q:=I_Z:I_G;$ by mapping cone procedure we see that the minimal resolution of $I_Q$ will be of the following type:

$$
\begin{array}{c}
\displaystyle{R(-d_0)\oplus \bigoplus_{h\in \overline{D}}R(-h)	\oplus \bigoplus_{h\in S}R(-h)}\\
\uparrow\\
\displaystyle{\bigoplus_{h\in d-F }R(-h)\oplus \bigoplus_{h\in d_0+ \overline{D}}R(-h)	\oplus \bigoplus_{h\in \vartheta_Z-S}R(-h)}\\
\uparrow\\
\displaystyle{\bigoplus_{h\in F }R(-h)}\\
\uparrow\\
0
\end{array}
$$
which means that $(D,E,F)$ is a Betti sequence of an almost complete intersection.
\end{proof}

\begin{exm}
In this example we produce a sequence in which all the conditions of Theorem \ref{betti} hold but the last one, so it is not admissible as a Betti sequence for an  almost complete intersection. Let $D=\{\{3,6,6,6\}\},$ $E=\{\{8,8,8,10,10,10,12,12,12,12\}\},$ $F=\{\{9,11,11,11,13,13,13\}\}$ and $\beta=(D,E,F).$ Then $\vartheta_Z=18,$ $\vartheta_G=15,$ $d=21,$ $D^*=\{\{6,6,6\}\},$ $S=\{\{6,6,6\}\},$ $G_0=\{\{5,5,5,7,7,7,9\}\},$ $G_1=\{\{6,8,8,8,10,10,10\}\},$ $G_2=\{15\}.$ Note that $\beta_G:=(G_0,G_1,G_2)$ is a Gorenstein Betti sequence with $\mci\beta_G=(5,5,7)$ hence $(6,6,6)\not\ge \mci\beta_G.$
\end{exm}

\begin{exm}
In this example we produce a sequence in which all the conditions of Theorem \ref{betti} hold, except that one regarding the $i=\min\{j\mid d_j=s\}+\mu_{S\setminus T}(s)-1$ for some $s\in \Supp (S\setminus T),$  so it is not admissible as a Betti sequence for an almost complete intersection. Let $D=\{\{2,5,5,7\}\},$ $E=\{\{7,7,7,9,9,9,10,11\}\},$ $F=\{\{8,10,10,10,12\}\}$ and $\beta=(D,E,F).$ Then $\vartheta_Z=17,$ $\vartheta_G=15,$ $d=19,$ $D^*=\{\{5,5,7\}\},$ $S=\{7\},$ $G_0=\{\{5,5,5,7,7,7,9\}\},$ $G_1=\{\{6,8,8,8,10,10,10\}\},$ $G_2=\{15\}.$ Note that $\beta_G:=(G_0,G_1,G_2)$ is a Gorenstein Betti sequence with $\mci\beta_G=(5,5,7),$ $(d_1,d_2,d_3)=(5,5,7)\ge\mci\beta_G,$ but $3=\min\{j\mid d_j=7\}+\mu_{S\setminus T}(7)-1$  and $d_3=7$ is not greater than $e_3=7.$
\end{exm}

\begin{exm}
The following sequences are admissible as Betti sequences for an  almost complete intersection:
 $$(D,E,F)=\big(\{4,5^{(2)},9\},\{9^{(3)},11^{(3)},13\},\{12^{(3)},14\}\big)$$
and
 $$(D,E,F)=\big(\{4,5^{(2)},9\},\{9^{(3)},10,11^{(3)},13\},\{10,12^{(3)},14\}\big).$$
Note that we can obtain the first one by the second one by deleting the \lq\lq ghost\rq\rq\ degree $10.$
\end{exm}

\vspace{1cm}
{\f
{\sc (A. Ragusa) Dip. di Matematica e Informatica, Universit\`a di Catania,\\
                  Viale A. Doria 6, 95125 Catania, Italy}\par
{\it E-mail address: }{\tt ragusa@dmi.unict.it} \par
{\it Fax number: }{\f +39095330094} \par
\vspace{.3cm}
{\sc (G. Zappal\`a) Dip. di Matematica e Informatica, Universit\`a di Catania,\\
                  Viale A. Doria 6, 95125 Catania, Italy}\par
{\it E-mail address: }{\tt zappalag@dmi.unict.it} \par
{\it Fax number: }{\f +39095330094}
}

\end{document}